\newtheorem*{thm*}{Theorem} 
\newtheorem{thm}{Theorem}
\newtheorem{dfn}{Definition}
\newtheorem{lemma}{Lemma}
\newtheorem*{lemma*}{Lemma}
\newtheorem{remark}{Remark}
\newtheorem{cor}{Corollary}
\begin{document}

\def\d{ \partial_{x_j} } 
\def\Na{{\mathbb{N}}}

\def\Z{{\mathbb{Z}}}

\def\IR{{\mathbb{R}}}

\newcommand{\E}[0]{ \varepsilon}

\newcommand{\la}[0]{ \lambda}

\newcommand{\s}[0]{ \mathcal{S}}

\newcommand{\AO}[1]{\| #1 \| }

\newcommand{\BO}[2]{ \left( #1 , #2 \right) }

\newcommand{\CO}[2]{ \left\langle #1 , #2 \right\rangle} 

\newcommand{\R}[0]{ \IR\cup \{\infty \} } 

\newcommand{\co}[1]{ #1^{\prime}} 

\newcommand{\p}[0]{ p^{\prime}} 

\newcommand{\m}[1]{   \mathcal{ #1 }} 

\newcommand{ \W}[0]{ \mathcal{W}}

\newcommand{ \A}[1]{ \left\| #1 \right\|_H }

\newcommand{\B}[2]{ \left( #1 , #2 \right)_H }

\newcommand{\C}[2]{ \left\langle #1 , #2 \right\rangle_{  H^* , H } }

 \newcommand{\HON}[1]{ \| #1 \|_{ H^1} }

\newcommand{ \Om }{ \Omega}

\newcommand{ \pOm}{\partial \Omega}

\newcommand{\D}{ \mathcal{D} \left( \Omega \right)}

\newcommand{\DP}{ \mathcal{D}^{\prime} \left( \Omega \right)  }

\newcommand{\DPP}[2]{   \left\langle #1 , #2 \right\rangle_{  \mathcal{D}^{\prime}, \mathcal{D} }}

\newcommand{\PHH}[2]{    \left\langle #1 , #2 \right\rangle_{    \left(H^1 \right)^*  ,  H^1   }    }

\newcommand{\PHO}[2]{  \left\langle #1 , #2 \right\rangle_{  H^{-1}  , H_0^1  }} 

 \newcommand{\HO}{ H^1 \left( \Omega \right)}

\newcommand{\HOO}{ H_0^1 \left( \Omega \right) }

\newcommand{\CC}{C_c^\infty\left(\Omega \right) }

\newcommand{\N}[1]{ \left\| #1\right\|_{ H_0^1  }  }

\newcommand{\IN}[2]{ \left(#1,#2\right)_{  H_0^1} }

\newcommand{\INI}[2]{ \left( #1 ,#2 \right)_ { H^1}} 

\newcommand{\HH}{   H^1 \left( \Omega \right)^* } 

\newcommand{\HL}{ H^{-1} \left( \Omega \right) }

\newcommand{\HS}[1]{ \| #1 \|_{H^*}}

\newcommand{\HSI}[2]{ \left( #1 , #2 \right)_{ H^*}}

\newcommand{\WO}{ W_0^{1,p}} 
\newcommand{\w}[1]{ \| #1 \|_{W_0^{1,p}}}  

\newcommand{\ww}{(W_0^{1,p})^*}   

\newcommand{\Ov}{ \overline{\Omega}}

\title{Regularity of semi-stable solutions to fourth order nonlinear eigenvalue problems on general domains}
\author{Craig Cowan \\
{\it\small Department of Mathematical Sciences}\\
{\it\small University of Alabama in Huntsville}\\
{\it\small 258A Shelby Center}\\
\it\small Huntsville, AL 35899 \\
\it\small cowan@math.uah.edu \\
{\it\small ctcowan@stanford.edu}\vspace{1mm}\and
Nassif Ghoussoub\thanks{Partially supported by a grant from the Natural Sciences and Engineering Research Council of Canada}\hspace{2mm}\\
{\it\small Department of Mathematics}\\
{\it\small University of British Columbia,}\\
{\it\small Vancouver BC Canada V6T 1Z2}\\
{\it\small  nassif@math.ubc.ca}\\
}

\maketitle


\vspace{3mm}

\begin{abstract} We examine the fourth order problem $\Delta^2 u = \lambda f(u) $ in $ \Omega$ with $ \Delta u = u =0 $ on $ \pOm$, 
where $ \lambda > 0$ is a parameter, $ \Omega$ is a bounded domain in $ \IR^N$ and where $f$ is one of the following nonlinearities: $ f(u)=e^u$, $ f(u)=(1+u)^p $ or $ f(u)= \frac{1}{(1-u)^p}$ where $ p>1$.   We show the regularity of all semi-stable solutions and hence of the extremal solutions, provided \[  N < 2 + 4 \sqrt{2} + 4 \sqrt{ 2 - \sqrt{2}} \approx 10.718 \mbox{\; \; when $ f(u)=e^u$,}\]  and
\[ \frac{N}{4} < \frac{p}{p-1} + \frac{p+1}{p-1} \left( \sqrt{ \frac{2p}{p+1}} + \sqrt{  \frac{2p}{p+1} - \sqrt{  \frac{2p}{p+1}}} - \frac{1}{2} \right)\] when $ f(u)=(u+1)^p$.   New results are also obtained in the case where $ f(u)=(1-u)^{-p}$.  These are substantial improvements to various results on critical dimensions obtained recently by various authors.    We view the equation as a system and then derive a new stability inequality, valid for minimal solutions, which allows a method of proof which is reminiscent of the second order case. 

\end{abstract}
\noindent
{\it \footnotesize 2010 Mathematics Subject Classification}. {\scriptsize }\\
{\it \footnotesize Key words: Biharmonic,  Extremal solution, Regularity of solutions}. {\scriptsize }

\section{Introduction} 
We examine the problem 
\begin{equation*} 
(N)_\lambda \qquad \left\{ 
\begin{array}{ll}
\Delta^2 u = \lambda f(u) & \hbox{in }\Omega \\
u =\Delta u = 0 &\hbox{on } \pOm, 
\end{array}
\right.
\end{equation*} 
where $ \lambda \ge 0$ is a parameter, $ \Omega$ is a bounded domain in $ \IR^N$, $N\geq 2$, and where $f$ is one of the following nonlinearities: $ f(u)=e^u$, $ f(u)=(1+u)^p $ or $ f(u)= \frac{1}{(1-u)^p}$ where $ p>1$.     We are interested in obtaining regularity results concerning the extremal solution $ u^*$ associated with $(N)_{\lambda^*}$.   

 The nonlinearities we examine naturally fit into the following two classes: \\

(R): \qquad $f$ is smooth, increasing, convex on  $\IR$ with $ f(0)=1$ and $ f$ is superlinear at $ \infty$ (i.e. $ \displaystyle \lim_{u \rightarrow \infty} \frac{f(u)}{u}=\infty$);\\

(S): \qquad $f$ is smooth, increasing, convex on $[0, 1)$ with $ f(0)=1$ and $\displaystyle  \lim_{u \nearrow 1} f(u)=+\infty$.  \\  

     Before we discuss some known results concerning the problem $(N)_\lambda$ we recall various facts concerning second order version of the above problem.

\subsection{The second order case} 
For a nonlinearity $ f$ of type (R) or (S), the following  second order analog of $(N)_\lambda$ with Dirichlet boundary conditions
\begin{equation*}
(Q)_\lambda \qquad  \left\{ 
\begin{array}{ll}
-\Delta u =\lambda f(u) &\hbox{in }\Omega \\
u =0 &\hbox{on } \pOm
\end{array}
\right.
\end{equation*} 
is by now quite well understood whenever $ \Omega$ is a bounded smooth domain in $ \IR^N$. See, for instance, \cite{BV,Cabre,CC,EGG,GG,Martel,MP,Nedev}. We now list the  properties one comes to expect when studying $(Q)_\lambda$.  

\begin{itemize} \item  There exists a finite positive critical parameter $ \lambda^*$ such that for all $ 0< \lambda < \lambda^*$ there exists a \textbf{minimal solution} $ u_\lambda$ of $ (Q)_\lambda$.   By minimal solution, we mean here that if $ v$ is another solution of $ (Q)_\lambda$ then $v \ge u_\lambda$ a.e. in $ \Omega$.  

\item For each $ 0< \lambda < \lambda^*$ the minimal solution $ u_\lambda$ is \textbf{semi-stable} in the sense that 
\[ \int_\Omega \lambda f'(u_\lambda) \psi^2 dx \le \int_\Omega | \nabla \psi|^2 dx, \qquad \forall \psi \in H_0^1(\Omega),\]  
and is unique among all the weak semi-stable solutions. 
 
\item The map $ \lambda \mapsto u_\lambda(x)$ is increasing on $ (0,\lambda^*)$ for each $ x \in \Omega$.    This allows one to define $ u^*(x):= \lim_{\lambda \nearrow \lambda^*} u_\lambda(x)$, the so-called {\bf extremal solution}, which can be shown to be a weak solution of $ (Q)_{\lambda^*}$.    In addition one can show that $ u^*$ is the unique weak solution of $(Q)_{\lambda^*}$. See \cite{Martel}. 
\item There are no solutions of $ (Q)_{\lambda}$ (even in a very weak sense) for $ \lambda > \lambda^*$.   

\end{itemize} 

A question which has attracted a lot of attention is whether the extremal function $ u^*$ is a classical solution of $(Q)_{\lambda^*}$. This is of interest since one can then apply the results from \cite{CR}  to start a second branch of solutions emanating from $(\lambda^*, u^*)$.    Note that in the case where $ f$ satisfies (R)  (resp. (S)) it is sufficient, in view of standard elliptic regularity theory, to show that $ u^*$ is bounded  (resp.  $\sup_\Omega u^* <1$). 

The answer typically depends on the nonlinearity $f$, the dimension $N$ and the geometry of the domain $ \Omega$.  We now list some known results.
 
 \begin{itemize} \item  \cite{CR} Suppose $ f(u)=^u$.  If  $ N <10$ then $ u^*$ is bounded.  For $ N \ge 10$ and $ \Omega$ the unit ball $ u^*(x)=-2 \log(|x|)$.

 \item \cite{CC} Suppose $ f$ satisfies (R) but without the convexity assumption and $ \Omega$ is the unit ball.   Then $ u^*$ is bounded for $ N <10$.  In view of the above result this is optimal.

 \item On general domains, and if $f$ satisfies (R), then $ u^*$ is bounded for $ N \le 3$ \cite{Nedev}.   Recently this has been improved to $ N \le 4$ provided the domain is convex (again one can drop the convexity assumption on $f$), see \cite{Cabre}.  
 
 \item  \cite{GG} Suppose $ f(u)=(1-u)^{-2}$. Then $ \sup_\Omega u^*<1$ for $ N \le 7$ and in the case of the unit ball $ u^*(x)= 1 - |x|^\frac{2}{3}$ for $ N \ge 8$. 
   
  \end{itemize}

In the previous list, we have not considered the nonlinearity $f(u)=(u+1)^p$, $p>1$, 
since many of the formula's become a bit cumbersome.

\subsection{The fourth order case}      
There are two obvious fourth order extensions of $ (Q)_{\lambda}$ namely the problem $(N)_\lambda$ mentioned above, and its Dirichlet counterpart 
 \begin{equation*}
(D)_\lambda \qquad  \left\{ 
\begin{array}{ll}
\Delta^2 u =\lambda f(u) &\hbox{in }  \Omega  \\
u = \partial_\nu u = 0 & \hbox{on }  \pOm, 
\end{array}
\right.
\end{equation*}   where $ \partial_\nu$ denote the normal derivative on $ \pOm$.    The problem $(Q)_{\lambda}$ is heavily dependent on the  maximum principle and hence this poses a major hurdle in the study of $(D)_\lambda$ since for general domains there is no maximum principle for $ \Delta^2$ with Dirichlet boundary conditions.  But if we restrict our attention to the unit ball then one does have a weak maximum principle \cite{BOG}.      The problem $(D)_\lambda$ was studied in \cite{AGGM} and various results were obtained,  but results concerning the boundedness of the extremal solution (for supercritical nonlinearities) were missing.   

 The first (truly supercritical) results concerning the boundedness of the extremal solution in a fourth order problem are due to \cite{DDGM} where they examined the problem $(D)_\lambda$ on the unit ball in $ \IR^N$ with $ f(t)=e^t$.   They showed that the extremal solution $ u^*$ is bounded if and only if $ N \le 12$.     Their approach is heavily dependent on the fact that $\Omega$ is the unit ball. 
Even in this situation there are two main hurdles.  The first is that the standard energy estimate approach, which was so successful in the second order case, does not appear to work in the fourth order case.   The second is the fact that it is quite hard to construct explicit solutions of $(D)_\lambda$ on the unit ball that satisfy both boundary conditions, which is needed to show that the extremal solution is unbounded for $ N \ge 13$.   So what one does is to find an explicit singular, semi-stable solution which satisfies the first boundary condition, and then to perturb it  enough to satisfy the second boundary condition but not too much so as to lose the semi-stability. Davila et al. \cite{DDGM} succeeded in doing so for $ N \ge 32$, but they were forced to use a computer assisted proof to show that the extremal solution is unbounded for the intermediate dimensions $ 13 \le N \le 31$.  Using various improved Hardy-Rellich inequalities from \cite{GM} the need for the computer assisted proof was removed in \cite{Moradifam}. The case where $ f(t)=(1-t)^{-2}$ was settled at the same time in \cite{CEG}, where we used methods developed in \cite{DDGM} to show that the extremal solution associated with $(D)_\lambda$ is a classical solution if and only if $ N \le 8$.

We now examine the problem $(N)_\lambda$.  The Navier boundary conditions allow for a maximum principle and hence many of the results that hold for $(Q)_\lambda$ also hold for $(N)_\lambda$.  We now list some basic properties and give a few definitions.  See \cite{BG, CDG} for more details.

\begin{dfn}   Given a smooth solution $u$ of $ (N)_\lambda$,  we say that $u$ is a semi-stable solution of $(N)_\lambda$ if 
\begin{equation} \label{class_stable}
\int  \lambda f'(u) \psi^2 dx \le \int  (\Delta \psi)^2 dx, \qquad \forall \psi \in H^2(\Omega) \cap H_0^1(\Omega).
\end{equation} 
\end{dfn}   

\begin{dfn}  We say a smooth solution  $ u$ of $(N)_\lambda$  is minimal  provided $ u \le v$ a.e. in $ \Omega$ for any solution $ v$ of $(N)_\lambda$. 
\end{dfn} 

We define the {\bf extremal parameter} $ \lambda^*$ as 
\[ \lambda^*:= \sup \left\{ 0 < \lambda: \mbox{there exists a smooth solution of $(N)_\lambda$} \right\}.\]

It is known, see \cite{BG,CDG,GW}, that: 
\begin{enumerate} \item  $0 < \lambda^* < \infty$.  

\item For each $ 0 < \lambda < \lambda^*$ there exists a smooth minimal solution $ u_\lambda$ of $(N)_\lambda$.  Moreover the minimal solution $ u_\lambda$ is semi-stable and is unique among the 
semi-stable solutions.  

\item For each $ x \in \Omega$, $ \lambda \mapsto u_\lambda(x)$ is strictly increasing on $(0, \lambda^*)$, and it therefore makes sense to define  $ u^*(x):= \lim_{\lambda \nearrow \lambda^*} u_\lambda(x), $   which we call the \textbf{extremal solution}.
\item  There are no smooth solutions for $ \lambda > \lambda^*$.

\end{enumerate} 

  It is standard to show that $u^*$  is a weak solution of $(N)_{\lambda^*}$ in a suitable sense that we shall not define here since it will not be needed in the sequel.  

We now examine some known results for the regularity of the extremal solution.  If the domain is the unit ball, then again one can use the methods of \cite{DDGM} and \cite{CEG} to obtain optimal results in the case of  $ f(t)=(1-t)^{-2}$ (see for instance \cite{memsbook} and \cite{MoradifamMEMS}).        For general domains the regularity of the extremal solution was limited to subcritical and critical nonlinearities, see \cite{BG,GW,memsbook}.   This was improved in \cite{craig1}
to include a range of supercritical nonlinearites.  We now list the results from \cite{craig1}.  Suppose $ \Omega$ a bounded domain in $ \IR^N$.  

\begin{itemize}   \item Suppose $ f(t)=e^t$ or $ f(t)=(t+1)^p$ for any $ 1<p$.  Then $ u^*$ is bounded for $ N \le 8$. 

\item Suppose $ f(t)=(t+1)^p$ where $ 1<p$ and $ N < \frac{8p}{p-1}$.  Then $ u^*$ is bounded. 

\item Suppose $f$ satisfies (R) and $ N \le 5$. Then $ u^*$ is bounded. 

\item Suppose $ f(t)=(1-t)^{-p}$ where $ 1 < p \neq 3$ and $ N \le \frac{8p}{p+1}$.  Then $ \sup_{\Omega}u^* <1$. 

 \end{itemize}  Various other results were obtained provided $ f,f', f''$ satisfy various constraints.   These are major improvements over the subcritical and critical results but these are still far from being optimal after one considers the results when $ \Omega$ is a ball.      

  We give a very brief outline of the proof used to prove the results from \cite{craig1}.  Firstly assume that $u$ is a smooth minimal solution of $(N)_\lambda$.   \\
  Step 1.  Test (\ref{class_stable}) on $ \psi:=\Delta u$ and integrate by parts.  One then sees the highest order terms cancel and one is left with 
  \[ \int f''(u) (-\Delta u) | \nabla u|^2 \le \lambda \int f(u).\]  
  
  The second key step is to obtain a pointwise lower estimate on $ -\Delta u$ and then one puts this back into the inequality from step 1 to obtain a useable estimate.  The next theorem gives the pointwise lower estimate.  
  We remark that this result is inspired by \cite{soup}.

\begin{thm*} A \cite{craig1} \label{max}. Suppose $ u$ is a solution of $ (N)_\lambda$ and $g$ is a smooth function defined on the range of $u$ with $ f(t) \ge g(t) g'(t)$ and $ g(t), g'(t), g''(t) \ge 0$ on the range of $ u$ with $ g(0)=0$.   Then 
\begin{equation}-\Delta u \ge \sqrt{\lambda} g(u) \qquad \mbox{ in $ \Omega$}.
\end{equation}
\end{thm*}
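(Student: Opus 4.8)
The plan is to exploit the system structure: set $v := -\Delta u \ge 0$, so that $(N)_\lambda$ becomes the cooperative system $-\Delta u = v$, $-\Delta v = \lambda f(u)$ with $u = v = 0$ on $\pOm$ (the sign $v \ge 0$ follows from the maximum principle, since $-\Delta v = \lambda f(u) \ge 0$ and $v = 0$ on $\pOm$). The idea is to show that $w := \sqrt{\lambda}\, g(u)$ is a subsolution for the equation satisfied by $v$, i.e.\ that $-\Delta(v - w) \ge 0$ in $\Omega$ with $v - w \ge 0$ on $\pOm$, and then conclude $v \ge w$ by the maximum principle. Since $g(0) = 0$, the boundary condition $(v - w)|_{\pOm} = 0$ is immediate. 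So the whole matter reduces to the differential inequality $-\Delta w \le \lambda f(u)$ in $\Omega$.

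The key computation is the chain rule: since $w = \sqrt{\lambda}\, g(u)$,
\[
-\Delta w = \sqrt{\lambda}\left( g'(u)(-\Delta u) - g''(u)|\nabla u|^2 \right) = \sqrt{\lambda}\, g'(u)\, v - \sqrt{\lambda}\, g''(u)|\nabla u|^2.
\]
Because $g''(u) \ge 0$ on the range of $u$, the last term is nonpositive, so
\[
-\Delta w \le \sqrt{\lambda}\, g'(u)\, v.
\]
Now I would like to replace $v$ by $w = \sqrt{\lambda}\, g(u)$ on the right-hand side. This is exactly the point where one runs a bootstrap/comparison argument: one first shows $v \ge 0$, then wants $v \ge \sqrt{\lambda}\, g(u)$, but the inequality above only gives control of $-\Delta w$ in terms of $v$, not $w$. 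The clean way around this is to argue that the function $\varphi := v - \sqrt{\lambda}\, g(u)$ satisfies $-\Delta \varphi = \lambda f(u) - (-\Delta w) \ge \lambda f(u) - \sqrt{\lambda}\, g'(u)\, v$; writing $v = \varphi + \sqrt{\lambda}\, g(u)$ gives $-\Delta \varphi + \sqrt{\lambda}\, g'(u)\, \varphi \ge \lambda f(u) - \lambda g(u) g'(u) \ge 0$ by the hypothesis $f(t) \ge g(t) g'(t)$. Since the zeroth-order coefficient $\sqrt{\lambda}\, g'(u) \ge 0$, the operator $-\Delta + \sqrt{\lambda}\, g'(u)$ satisfies the maximum principle, and combined with $\varphi \ge 0$ on $\pOm$ this yields $\varphi \ge 0$, i.e.\ $-\Delta u = v \ge \sqrt{\lambda}\, g(u)$ in $\Omega$.

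The main obstacle is making the maximum-principle step fully rigorous: one needs $\varphi$ to be smooth enough (which follows since $u$ is assumed a smooth solution and $g$ is smooth on the range of $u$), and one needs the correct sign on the zeroth-order term so that the weak maximum principle applies on a general bounded domain without any smallness of $\lambda$. The hypotheses $g, g', g'' \ge 0$ and $g(0) = 0$ are tailored precisely to make the boundary term vanish and the zeroth-order coefficient have the right sign; the hypothesis $f \ge g g'$ is what forces the right-hand side of the inequality for $\varphi$ to be nonnegative. No energy estimates or integrations by parts over $\psi = \Delta u$ are needed here — that is Step~1, used afterward; this theorem is purely a pointwise comparison argument.
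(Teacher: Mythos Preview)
Your proof is correct and is essentially the same argument as the paper's: you define $\varphi = -\Delta u - \sqrt{\lambda}\,g(u)$ (the paper calls this $w$), compute that $-\Delta\varphi + \sqrt{\lambda}\,g'(u)\,\varphi \ge \lambda\bigl[f(u)-g(u)g'(u)\bigr] \ge 0$ with $\varphi=0$ on $\partial\Omega$, and conclude via the maximum principle for $-\Delta + \sqrt{\lambda}\,g'(u)$. The only cosmetic difference is that the paper keeps the term $\sqrt{\lambda}\,g''(u)|\nabla u|^2$ and writes an equality, whereas you drop it early and write an inequality; either way the sign hypotheses on $g,g',g''$ and $f\ge gg'$ do the work.
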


\begin{proof}  Define $ w:=-\Delta u - \sqrt  \lambda g(u)$ and so $ w =0$ on $ \pOm$ and a computation shows that 
\[ -\Delta w + \sqrt  \lambda g'(u) w = \lambda [f(u) -g(u) g'(u)] + \sqrt  \lambda g''(u) | \nabla u|^2 \qquad \mbox{ in $ \Omega$}.\]   The assumptions on $ g$ allow one to apply the maximum principle and obtain that $ w \ge 0$ in $ \Omega$.  \\
\end{proof}

Note that $ g(u):= \sqrt{2} \left( \int_0^u (f(t)-1) dt \right)^\frac{1}{2},$ satisfies the hypothesis of Theorem A and this was used in \cite{craig1} to obtain results regarding arbitrary nonlinearities $f$.  In the current work we don't examine arbitrary nonlinearities and rather than use this general formula for the pointwise bounds we prefer some slightly different choices of $g$.     

There have been two works \cite{Wei_ye,Wei} that improve on some of the results from \cite{craig1}.   In both works the main interest was in the existence of nontrivial stable   solutions of $ \Delta^2 u= u^p$ in $ \IR^N$.  As a byproduct they obtained improved regularity results concerning $(N)_{\lambda^*}$ in the case where $ f(u)=(u+1)^p$ which we now state.

\begin{itemize} \item \cite{Wei_ye}   For each $ 9 \le N \le 19$ there is some $ \E_N>0$ such that the extremal solution associated with $ (N)_{\lambda^*}$ is bounded provided  $ 1 <p< \frac{N}{N-8} +\E_N$.     One does not have estimates on $ \E_N$.

\item  \cite{Wei}  For each $N \ge 20$ the extremal solution associated with $(N)_{\lambda^*}$ is bounded provided 
\[ 1 < p < 1 + \frac{8 p_N^*}{N-4}.\]    Here $ p_N^*$ stands for the smallest real root which greater than $ \frac{N-4}{N-8}$ of the following equation: 

\begin{eqnarray*}
512(2-N) x^6 + 4(N^3-60N^2+670N-1344)x^5 && \\
-2(13N^3-424N^2+3064N -5408)x^4 
+ 2(27N^3-572N^2 +3264N-5440)x^3 && \\ -(49N^3 -772N^2+3776 N - 5888)x^2 
+4(5 N^3 -66 N^2 +288N -416)x  && \\ -3(N^3-12N^2+48N-64)=0
\end{eqnarray*}

\end{itemize}

\section{New approach}

The main idea of this work is to view $(N)_\lambda$ as the system 

\begin{equation*} 
(N')_\lambda \qquad  \left\{ 
\begin{array}{ll}
-\Delta u = v & \hbox{in }\Omega \\
-\Delta v = \lambda f(u) &\hbox{on } \pOm, \\
u = v = 0 & \hbox{on } \pOm, \\
\end{array}
\right.
\end{equation*}   and then to use the results from \cite{Mont} to derive a new stability inequality which 
gives improved estimates on minimal solutions.   
We mention that there have been a few works which use the results from \cite{Mont} to obtain a stability like inequality for minimal solutions of  various systems.  In \cite{craig0} a stability like inequality was obtained, using \cite{Mont}, which was useful for showing regularity of the extremal solutions associated with  $ -\Delta u = \lambda e^u, \; \; -\Delta v = \gamma e^u$. 
   This new approach is motivated by portions of \cite{cowan_fazly} and \cite{fg}.

The result from \cite{Mont} we use is: 
\begin{thm*} B \cite{Mont}.  Suppose $ (u,v)$ is a smooth minimal solution of $ -\Delta u = \gamma G(u,v), \; \; -\Delta v = \lambda F(u,v)$ in $ \Omega$ with $ u=v=0 $ on $ \pOm$.  Here $ F(u,v),G(u,v)$ are positive nonlinearities which are increasing in $ u$ and $ v$.  Then there is some $ \eta \ge 0$ and $ 0 < \phi,\psi \in H_0^1(\Omega)$ such that 
\[ -\Delta \phi = \gamma G_u \phi + \gamma G_v \psi + \eta \phi, \quad -\Delta \psi = \lambda F_u \phi + \lambda F_v \psi + \eta \psi \qquad \mbox{in $\Omega$}.\] 

\end{thm*}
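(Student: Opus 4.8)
The plan is to read the desired triple $(\eta,\phi,\psi)$ as a \emph{principal eigenpair} for the elliptic system obtained by linearizing at the minimal solution $(u,v)$, and then to use minimality of $(u,v)$ to pin down the sign of $\eta$. Set
\[ A(x) = \left( \begin{array}{cc} \gamma G_u(u,v) & \gamma G_v(u,v) \\ \lambda F_u(u,v) & \lambda F_v(u,v) \end{array} \right), \]
which has bounded nonnegative entries on $\Omega$ since $F,G$ are smooth and increasing in each variable. Assuming in addition that the system is genuinely coupled, i.e. $G_v$ and $F_u$ are not identically $0$ (which holds in every application of interest here, where $G(u,v)=v$ gives $G_v\equiv 1$ and $F(u,v)=f(u)$ gives $F_u=f'(u)>0$), the eigenvalue problem $-\Delta(\phi,\psi)^{T} = A(x)(\phi,\psi)^{T} + \eta\,(\phi,\psi)^{T}$ with $\phi=\psi=0$ on $\pOm$ is cooperative and irreducible.

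First I would establish existence of a principal eigenpair: there are $\eta_1\in\IR$ and $\phi,\psi\in H_0^1(\Omega)\cap C(\Ov)$, strictly positive in $\Omega$, solving the problem above with $\eta=\eta_1$. This is a Krein--Rutman argument. For $K>0$ large the Dirichlet problem
\[ -\Delta\phi-\gamma G_u\phi-\gamma G_v\psi+K\phi=h_1,\qquad -\Delta\psi-\lambda F_u\phi-\lambda F_v\psi+K\psi=h_2 \]
is uniquely solvable (the pointwise coefficient matrix is then diagonally dominant with positive diagonal and nonpositive off-diagonal), and by the maximum principle for such weakly coupled cooperative systems together with irreducibility its solution operator $T$ is compact and strongly positive on the Banach space of pairs of continuous functions on $\Ov$ vanishing on $\pOm$, ordered by the cone of pairs of nonnegative functions. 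Hence the spectral radius $\rho>0$ of $T$ is a simple eigenvalue with a strictly positive eigenfunction pair $(\phi,\psi)$; setting $\eta_1:=\rho^{-1}-K$ gives $-\Delta(\phi,\psi)^{T}=A(x)(\phi,\psi)^{T}+\eta_1(\phi,\psi)^{T}$, and elliptic regularity plus the strong maximum principle and Hopf's lemma give $\phi,\psi>0$ in $\Omega$ with $\partial_\nu\phi,\partial_\nu\psi<0$ on $\pOm$. (Alternatively, $\eta_1$ could be defined by a Collatz--Wielandt-type formula over positive pairs and shown to be attained.)

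It remains to prove $\eta_1\ge 0$, and this is where minimality enters. Suppose $\eta_1<0$. For small $\E>0$ put $\bar u=u-\E\phi$ and $\bar v=v-\E\psi$. Using $-\Delta u=\gamma G(u,v)$, the identity $-\Delta\phi=\gamma G_u\phi+\gamma G_v\psi+\eta_1\phi$, and a second-order Taylor expansion of $G$ at $(u,v)$, one finds $-\Delta\bar u-\gamma G(\bar u,\bar v)=\E|\eta_1|\phi-R$ with $|R|\le C\E^2(\phi^2+\psi^2)$; since $\phi,\psi$ are bounded and, near $\pOm$, both comparable to $\mathrm{dist}(x,\pOm)$ by Hopf's lemma, one has $\phi^2+\psi^2\le C'\phi$ on $\Omega$, so $-\Delta\bar u\ge\gamma G(\bar u,\bar v)$ for $\E$ small, and the analogous estimate (now comparing against $\psi$) handles the $\bar v$-equation. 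Thus $(\bar u,\bar v)$ is a strict supersolution; moreover $(0,0)$ is a subsolution and $0\le\bar u,\bar v$ in $\Omega$ for $\E$ small (again by Hopf, as $u,v>0$ in $\Omega$). The sub/supersolution method for cooperative systems, i.e. monotone iteration started from $(\bar u,\bar v)$, then produces a solution $(\tilde u,\tilde v)$ with $(0,0)\le(\tilde u,\tilde v)\le(\bar u,\bar v)=(u-\E\phi,v-\E\psi)$, which is strictly below $(u,v)$ in $\Omega$ --- contradicting the minimality of $(u,v)$. Therefore $\eta_1\ge 0$, and $\eta:=\eta_1$ with the eigenpair $(\phi,\psi)$ proves the theorem.

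The existence of the principal eigenpair is comparatively routine once irreducibility is noted, so I expect the main obstacle to be the sign step: making the supersolution verification rigorous \emph{up to the boundary}, which forces one to control the quadratic Taylor remainder against $\E|\eta_1|\phi$ (and its analogue for $\psi$) uniformly on $\Omega$ --- hence the need for Hopf-type two-sided bounds on $\phi$ and $\psi$ --- and to check that the coupling terms $\gamma G_v\psi$ and $\lambda F_u\phi$ do not spoil the componentwise inequalities, together with a careful invocation of the monotone-iteration machinery for cooperative systems, whose ordered sub/supersolution hypotheses and monotonicity of $F,G$ must be matched exactly.
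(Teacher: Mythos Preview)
The paper does not prove Theorem B at all: it is quoted verbatim from \cite{Mont} and used as a black box (the paper immediately moves on to derive Lemma~\ref{gen_in} from it). So there is no ``paper's own proof'' to compare your attempt against.

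That said, your outline is the standard route to this kind of result and is essentially how such principal-eigenvalue theorems for cooperative systems are established: Krein--Rutman on the shifted resolvent to produce a strictly positive eigenpair $(\phi,\psi)$ with principal eigenvalue $\eta_1$, and then a sub/supersolution perturbation $(u-\E\phi,\,v-\E\psi)$ to force $\eta_1\ge 0$ from minimality. The points you flag as delicate --- irreducibility (needed to invoke the strong form of Krein--Rutman), and the uniform-up-to-the-boundary control of the quadratic Taylor remainder via two-sided Hopf bounds $c\,\mathrm{dist}(x,\pOm)\le\phi,\psi\le C\,\mathrm{dist}(x,\pOm)$ --- are exactly the places where care is required, and your handling of them is correct. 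One small remark: for the second equation you need $\phi^2+\psi^2\le C'\psi$ (not $C'\phi$), but this follows by the same Hopf comparison since $\phi$ and $\psi$ are mutually comparable on $\Ov$.
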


 We now state the general stability inequality we use.  
 
 \begin{lemma} \label{gen_in} Let $(u,v)$ denote a smooth minimal solution of the system from Theorem B.  Then  
 \[ \int \gamma G_u \alpha^2 +  \lambda F_v \beta^2 + 2 \sqrt{\gamma \lambda} \int \sqrt{ F_u G_v} \alpha \beta \le  \int | \nabla \alpha|^2 +  \int | \nabla \beta|^2,\] for all $ \alpha,\beta \in H_0^1(\Omega)$. 
 
 \end{lemma}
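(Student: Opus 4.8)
The plan is to start from the eigenfunction decomposition furnished by Theorem~B: there exist $\eta \ge 0$ and $0 < \phi,\psi \in H_0^1(\Omega)$ with $-\Delta \phi = \gamma G_u \phi + \gamma G_v \psi + \eta \phi$ and $-\Delta \psi = \lambda F_u \phi + \lambda F_v \psi + \eta \psi$ in $\Omega$. The idea is to test each of these equations against a suitable ratio of a test function with the corresponding eigenfunction, in the spirit of the classical ``$\phi$-trick'' used to derive stability inequalities in the second-order case. Concretely, given $\alpha,\beta \in H_0^1(\Omega)$ (which we may first take smooth and compactly supported, then extend by density), I would test the $\phi$-equation with $\alpha^2/\phi$ and the $\psi$-equation with $\beta^2/\psi$.

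Carrying this out, the left-hand sides produce $\int \gamma G_u \alpha^2 + \gamma G_v \frac{\psi}{\phi}\alpha^2 + \eta \alpha^2$ and $\int \lambda F_v \beta^2 + \lambda F_u \frac{\phi}{\psi}\beta^2 + \eta \beta^2$. For the right-hand sides I would use the standard identity $\int \nabla \phi \cdot \nabla(\alpha^2/\phi) = \int |\nabla \alpha|^2 - \int \left| \nabla \alpha - \frac{\alpha}{\phi}\nabla \phi \right|^2 \le \int |\nabla \alpha|^2$, and similarly for $\beta$ and $\psi$. Adding the two resulting inequalities and discarding the nonnegative terms $\eta \int (\alpha^2 + \beta^2) \ge 0$, one obtains
\[
\int \gamma G_u \alpha^2 + \lambda F_v \beta^2 + \int \left( \gamma G_v \frac{\psi}{\phi}\alpha^2 + \lambda F_u \frac{\phi}{\psi}\beta^2 \right) \le \int |\nabla \alpha|^2 + \int |\nabla \beta|^2.
\]
The final step is to bound the middle integral below by $2\sqrt{\gamma\lambda}\int \sqrt{F_u G_v}\,\alpha\beta$: pointwise, the elementary inequality $a + b \ge 2\sqrt{ab}$ applied with $a = \gamma G_v \frac{\psi}{\phi}\alpha^2$ and $b = \lambda F_u \frac{\phi}{\psi}\beta^2$ gives exactly $\gamma G_v \frac{\psi}{\phi}\alpha^2 + \lambda F_u \frac{\phi}{\psi}\beta^2 \ge 2\sqrt{\gamma\lambda F_u G_v}\,|\alpha||\beta| \ge 2\sqrt{\gamma\lambda}\sqrt{F_u G_v}\,\alpha\beta$, the cross terms $\psi/\phi$ cancelling cleanly. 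This yields the claimed inequality.

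The main technical obstacle is the justification of testing against $\alpha^2/\phi$ and $\beta^2/\psi$: these are a priori only in $H_0^1$ if $\phi,\psi$ are bounded below away from zero on the support of $\alpha,\beta$, which holds for $\alpha,\beta \in C_c^\infty(\Omega)$ since $\phi,\psi$ are positive and continuous (elliptic regularity applied to the Theorem~B system) and hence have positive infimum on any compact subset of $\Omega$. The inequality for general $\alpha,\beta \in H_0^1(\Omega)$ then follows by a density argument, since both sides are continuous in the $H_0^1$ norm (using that $G_u, F_v, \sqrt{F_uG_v}$ are bounded on the range of the smooth minimal solution $(u,v)$). One should also take a moment to record the integration-by-parts identity above, which is where the positivity of $\phi$ and $\psi$ is essential.
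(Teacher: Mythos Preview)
Your proof is correct and follows essentially the same route as the paper: test the two eigenfunction equations from Theorem~B against $\alpha^2/\phi$ and $\beta^2/\psi$ (equivalently, multiply $-\Delta\phi/\phi$ and $-\Delta\psi/\psi$ by $\alpha^2$ and $\beta^2$), use the standard inequality $\int(-\Delta E/E)\,w^2 \le \int|\nabla w|^2$, add, and then apply AM--GM pointwise to the cross term. The paper's version is terser (it does not spell out the $\eta$-discard or the density argument), but the argument is the same.
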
 

\begin{proof} We rewrite the result from the above theorem as 
\[ \frac{-\Delta \phi}{\phi} \ge \gamma G_u + \frac{\gamma G_v \psi }{\phi}, \qquad \frac{-\Delta \psi}{ \psi} \ge \frac{\lambda F_u \phi}{\psi} + \lambda F_v,\]  and we multiply the first equation by $ \alpha^2 $ and the second equation by $ \beta^2$ where $ \alpha,\beta \in C_c^\infty(\Omega)$ and add the equations and integrate over $ \Omega$ to obtain 
\begin{equation} \label{old}
 \int \gamma G_u \alpha^2 + \lambda F_v \beta^2 + \int \gamma G_v \alpha^2 \frac{\psi}{\phi} + \lambda F_u \beta^2 \frac{\phi}{\psi} \le \int | \nabla \alpha|^2 + \int | \nabla \beta|^2,
\end{equation}  where we used the fact that for any sufficiently regular function $ E>0$ we have 
\[ \int \frac{-\Delta E}{E} w^2 \le \int | \nabla w|^2,\] for all $ w \in C_c^\infty(\Omega)$.

 We now find a lower estimate for the second integral.  For this note that some simple calculus shows that $ a t + \frac{b}{t} \ge 2 \sqrt{ab}$ for all $ t >0$ and any $ a,b>0$.   So we have 
\[ \gamma G_v \alpha^2 \frac{\psi}{\phi} + \lambda F_u \beta^2 \frac{\phi}{\psi} \ge 2 \sqrt{\lambda \gamma} \sqrt{F_u G_v}  \; \alpha \beta.\]  Using this bound one gets the desired result.
\end{proof} 
 We remark that inequalities similar to (\ref{old}) were used in \cite{craig0}  but the new ingredient here is given by the above approximation.   This new approximation allows us to handle systems that the previous method could not be used for.

 One should note that the system $(N')_\lambda$ does not exactly fit into the above framework since $G(u,v)= v$.  One could slightly adjust the proof from \cite{Mont} but we choose not to do this and we take a   slightly different approach to get a similar result.    Let $ u_\lambda$ denote the minimal solution of $ (N)_\lambda$ and set $ v_\lambda:=-\Delta u_\lambda$.  Then $ u_\lambda, v_\lambda$ are increasing in $ \lambda$  and so taking a derivative in $ \lambda$ gives 
 \begin{equation} \label{ppo} 
  -\Delta \phi = \psi, \qquad -\Delta \psi \ge \lambda f'(u_\lambda) \phi,
 \end{equation}  where $ \phi:= \partial_\lambda u_\lambda$ \; $ \psi:= \partial_\lambda v_\lambda$.   Note that $ \phi,\psi \ge 0$ by monotonicity and are positive by the maximum principle. We can now proceed as in Lemma \ref{gen_in} to obtain a stability inequality valid for minimal solutions of $(N)_\lambda$ given by  
 
 \begin{cor}  Let $ 0 < \lambda<\lambda^*$ and $ u_\lambda$ denote the minimal solution of $(N)_\lambda$.  Then 
 \begin{equation} \label{four_sta} 
 \sqrt{\lambda} \int \sqrt{ f'(u_\lambda)} \phi^2 \le \int | \nabla \phi|^2,  
 \end{equation} for all $ \phi \in H_0^1(\Omega)$.
 \end{cor}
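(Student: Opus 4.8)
The plan is to reproduce, essentially verbatim, the argument used for Lemma \ref{gen_in}, but with the relations (\ref{ppo}) playing the role of the conclusion of Theorem B; this is exactly what is meant by ``proceed as in Lemma \ref{gen_in}'' in the paragraph preceding the corollary. First I would record the properties of the functions $\phi := \partial_\lambda u_\lambda$ and $\psi := \partial_\lambda v_\lambda$ from (\ref{ppo}) that make the argument run: they are smooth, they satisfy $-\Delta\phi = \psi$ and $-\Delta\psi \ge \lambda f'(u_\lambda)\phi$ in $\Omega$, they vanish on $\pOm$ (since $u_\lambda = v_\lambda = 0$ there for every $\lambda$), and --- being nonnegative by the monotonicity of $\lambda \mapsto u_\lambda$ and $\lambda \mapsto v_\lambda$ and not identically zero --- they are strictly positive in $\Omega$ by the strong maximum principle. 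To avoid a notational clash I would write $\zeta$ for the generic function denoted $\phi$ in the statement of the corollary.

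Then I would use $\phi,\psi>0$ to rewrite (\ref{ppo}) as the pointwise relations $\frac{-\Delta\phi}{\phi} = \frac{\psi}{\phi}$ and $\frac{-\Delta\psi}{\psi} \ge \frac{\lambda f'(u_\lambda)\phi}{\psi}$ in $\Omega$. For an arbitrary $\zeta \in C_c^\infty(\Omega)$ I would multiply the first relation by $\zeta^2$, the second by $\zeta^2$, add them, integrate over $\Omega$, and invoke the elementary inequality $\int_\Omega \frac{-\Delta E}{E}w^2 \le \int_\Omega |\nabla w|^2$ --- valid for smooth $E > 0$ in $\Omega$ and $w \in C_c^\infty(\Omega)$, obtained by writing $w = E\rho$ and integrating by parts --- once with $E = \phi$ and once with $E = \psi$. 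This yields $\int_\Omega \frac{\psi}{\phi}\zeta^2 + \int_\Omega \frac{\lambda f'(u_\lambda)\phi}{\psi}\zeta^2 \le 2\int_\Omega |\nabla\zeta|^2$.

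The final step is the pointwise bound $at + \frac{b}{t} \ge 2\sqrt{ab}$ for $t>0$ and $a,b\ge 0$, exactly as in Lemma \ref{gen_in}: taking $t = \psi/\phi$, $a = \zeta^2$ and $b = \lambda f'(u_\lambda)\zeta^2$ bounds the left-hand integrand from below by $2\sqrt{\lambda f'(u_\lambda)}\,\zeta^2$, so that $2\sqrt{\lambda}\int_\Omega \sqrt{f'(u_\lambda)}\,\zeta^2 \le 2\int_\Omega |\nabla\zeta|^2$; dividing by $2$ gives (\ref{four_sta}) for $\zeta \in C_c^\infty(\Omega)$. Since $u_\lambda$ is a smooth solution on the bounded set $\overline{\Omega}$, $f'(u_\lambda)$ is bounded there, so both sides are continuous in the $H_0^1(\Omega)$-norm and the inequality extends to all $\zeta \in H_0^1(\Omega)$ by density of $C_c^\infty(\Omega)$.

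I do not expect a genuine obstacle; the whole content is the mechanism of Lemma \ref{gen_in} specialized to the degenerate ``system'' $(N')_\lambda$, whose first nonlinearity $G(u,v)=v$ is exactly why one must replace the eigenfunction pair furnished by Theorem B with the hand-built pair (\ref{ppo}). The one mildly delicate point is that the second relation in (\ref{ppo}) is an inequality rather than an equality --- it comes from dropping the positive term $f(u_\lambda)$ in $-\Delta\psi = f(u_\lambda) + \lambda f'(u_\lambda)\phi$ --- but it points in the harmless direction, so the chain $\int_\Omega |\nabla\zeta|^2 \ge \int_\Omega \frac{-\Delta\psi}{\psi}\zeta^2 \ge \int_\Omega \frac{\lambda f'(u_\lambda)\phi}{\psi}\zeta^2$ goes through without change.
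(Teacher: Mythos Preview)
Your proposal is correct and is exactly the approach the paper intends: it specializes the proof of Lemma \ref{gen_in} to the pair $\phi=\partial_\lambda u_\lambda$, $\psi=\partial_\lambda v_\lambda$ furnished by (\ref{ppo}), takes the two test functions equal ($\alpha=\beta=\zeta$), and applies the same Picone-type inequality and AM--GM step. Your observations about the notational clash, the harmless direction of the inequality in (\ref{ppo}), and the density argument are all on point.
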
 

This inequality coupled with pointwise estimates on $ v:=-\Delta u$ from \cite{craig1} (see Theorem A above and Lemma \ref{pointwise})  will be the main tools we use to obtain new energy estimates valid for the extremal solution associated with $(N)_{\lambda^*}$.

\section{Main results}

We now give our main results.  $\Omega$ will always denote a smooth bounded domain in $ \IR^N$.   Our first result deals with the well known examples of $f$ which satisfy (R). 

\begin{thm} \label{expon} Suppose $ f(u)=e^u$ and 
\[ N < 2 + 4 \sqrt{2} + 4 \sqrt{ 2 - \sqrt{2}} \approx 10.718.\]   Then the extremal solution $ u^*$ associated with $(N)_{\lambda^*}$ is bounded.  

\end{thm}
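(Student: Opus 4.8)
The plan is to combine the new stability inequality \eqref{four_sta} valid for minimal solutions with the pointwise lower bound on $-\Delta u$ coming from Theorem A, and to run an iteration scheme reminiscent of the second-order (Crandall--Rabinowitz / Nedev) argument. Throughout, let $u=u_\lambda$ be a smooth minimal solution of $(N)_\lambda$ with $v=-\Delta u$, and recall $f'(u)=e^u=f(u)$. First I would choose in Theorem A the function $g(t)=\sqrt{2}\,(e^t-1)^{1/2}$; one checks $f(t)=e^t\ge g(t)g'(t)=e^t-1$ and $g,g',g''\ge 0$ with $g(0)=0$, so that $-\Delta u\ge \sqrt{\lambda}\,\sqrt{2}\,(e^u-1)^{1/2}$ pointwise. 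Actually, for the exponential the cleaner choice is to aim for a bound of the form $-\Delta u\ge c\sqrt{\lambda}\,e^{\gamma u}$ (minus lower-order terms) for a suitable exponent $\gamma$; I would pick $g$ so that $gg'\le e^t$ and $g(t)\sim \sqrt{2}\,e^{t/2}$ near infinity, giving $v=-\Delta u\gtrsim \sqrt{\lambda}\,e^{u/2}$.

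The core step is the choice of test function in \eqref{four_sta}. Following the second-order template, I would test with $\phi=e^{\alpha u}-1$ (or a truncation thereof) for a free parameter $\alpha>0$, so that the left side produces $\sqrt{\lambda}\int e^{u/2}(e^{\alpha u}-1)^2$ while the right side is $\alpha^2\int e^{2\alpha u}|\nabla u|^2$. To handle $\int e^{2\alpha u}|\nabla u|^2$ I would integrate by parts using $-\Delta u=v$: namely $\int e^{2\alpha u}|\nabla u|^2 = \frac{1}{2\alpha}\int e^{2\alpha u}(-\Delta u) + (\text{boundary}) = \frac{1}{2\alpha}\int e^{2\alpha u}v$, and then invoke the pointwise bound $v\ge c\sqrt{\lambda}\,e^{u/2}$ in reverse — or rather, use Hölder together with the equation $-\Delta v=\lambda e^u$ tested against $e^{\beta u}$ to close a system of integral inequalities relating $\int e^{(2\alpha+1/2)u}$-type quantities. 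The upshot should be an inequality of the form $\sqrt\lambda\int e^{(2\alpha+1/2)u}\le C$ with $C$ independent of $\lambda$, provided $\alpha$ can be chosen in a nonempty range; the range is nonempty precisely when $N$ satisfies the stated bound, and optimizing over $\alpha$ (balancing the constants coming from the two integrations by parts and the two Cauchy--Schwarz steps, which is where the nested radicals $\sqrt{2}$ and $\sqrt{2-\sqrt 2}$ enter) yields the threshold $N<2+4\sqrt 2+4\sqrt{2-\sqrt2}$. A uniform $L^q$ bound on $e^u$ for $q$ large enough (here $q>N/4$ suffices, by elliptic regularity for $\Delta^2$ on the pair $(N')_\lambda$: first $v\in L^{q}$ gives $u\in W^{2,q}$ hence $u\in L^\infty$ once $q>N/2$... more carefully, $\lambda f(u)=\lambda e^u\in L^q$ with $q>N/4$ gives $u\in W^{4,q}\hookrightarrow L^\infty$) then gives boundedness of $u^*$ after passing to the limit $\lambda\nearrow\lambda^*$ via monotone convergence and the fact that the bound is uniform.

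In carrying this out I would first establish the iteration for minimal solutions $u_\lambda$ with constants independent of $\lambda<\lambda^*$, then pass to the limit: since $u_\lambda\nearrow u^*$ pointwise and $\{e^{u_\lambda}\}$ is bounded in $L^q(\Omega)$ with $q>N/4$, monotone/dominated convergence gives $e^{u^*}\in L^q$, and elliptic regularity applied twice to $-\Delta u^*=v^*$, $-\Delta v^*=\lambda^* e^{u^*}$ yields $u^*\in L^\infty(\Omega)$. The bookkeeping reduces to: (i) verifying the integration-by-parts identities have no bad boundary contributions (true since $u=v=0$ on $\partial\Omega$ and everything is smooth for $\lambda<\lambda^*$), and (ii) checking the algebra that the admissible exponent interval is nonempty iff $N$ is below the stated constant.

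The main obstacle I expect is step (ii) together with getting the pointwise estimate and the stability inequality to "talk to each other" with matching exponents: the stability inequality \eqref{four_sta} only gives control with the weight $\sqrt{f'(u)}=e^{u/2}$ (a square-root loss compared to the second-order case, which is exactly why fourth-order critical dimensions are larger), so one must feed in the pointwise bound $-\Delta u\ge c\sqrt\lambda\,e^{u/2}$ to recover a full power of $e^u$ on the right-hand side after integrating by parts. Balancing these two half-powers against each other through the chain of Cauchy--Schwarz inequalities is delicate, and it is precisely the place where the somewhat unusual constant $2+4\sqrt2+4\sqrt{2-\sqrt2}$ is produced; I would expect to introduce \emph{two} free parameters (the exponent in the test function and the exponent used when testing the $v$-equation) and optimize jointly, which is the computational heart of the argument.
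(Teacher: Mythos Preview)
Your proposal has a genuine gap at the choice of test function. You suggest taking $\phi=e^{\alpha u}-1$ in the stability inequality \eqref{four_sta}, which leads (after one integration by parts against $-\Delta u=v$) to
\[
\sqrt{\lambda}\int e^{u/2}(e^{\alpha u}-1)^2 \;\le\; \tfrac{\alpha}{2}\int (e^{2\alpha u}-1)\,v.
\]
To close this you would need to bound the right-hand side from \emph{above} by (a constant less than one times) the left-hand side, i.e.\ essentially replace $v$ by something $\lesssim e^{u/2}$. But Theorem~A gives only the \emph{lower} bound $v\ge \sqrt{2\lambda}\,(e^{u/2}-1)$, which goes the wrong way here. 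Your suggestion to ``test the $v$-equation against $e^{\beta u}$'' produces the cross term $\int e^{\beta u}\,\nabla u\cdot\nabla v$, which does not obviously relate back to either side; the vague appeal to H\"older and a two-parameter optimization does not explain how to absorb $\int e^{2\alpha u}v$, and I do not see how the nested radical $2+4\sqrt{2}+4\sqrt{2-\sqrt{2}}$ could emerge from this route.

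The paper's key idea is precisely to avoid this obstruction by testing \eqref{four_sta} with $\phi=v^{t}$, a power of $v=-\Delta u$ rather than a function of $u$. Then the right-hand side integrates by parts against the \emph{$v$-equation} $-\Delta v=\lambda e^{u}$, giving $\frac{t^{2}\lambda}{2t-1}\int e^{u}v^{2t-1}$; on the left one writes $e^{u/2}v^{2t}=e^{u/2}\,v\cdot v^{2t-1}$ and uses the pointwise lower bound on the single factor $v$ (now in the correct direction) to produce $\sqrt{2}\,\lambda\int e^{u}v^{2t-1}$ up to lower order. Both sides carry the \emph{same} integrand $e^{u}v^{2t-1}$, so the comparison reduces to the scalar inequality $\sqrt{2}>\frac{t^{2}}{2t-1}$, whose endpoint $t_{0}=\sqrt{2}+\sqrt{2-\sqrt{2}}$ yields the stated dimension bound after one more use of the pointwise estimate and elliptic regularity. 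The moral: in this fourth-order setting the stability inequality must be tested on powers of $-\Delta u$, not on functions of $u$, so that the lower bound on $v$ can be spent exactly once, on the left, to match the exponent produced by the $v$-equation on the right.
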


\begin{thm} \label{poly} Suppose $f(u)=(u+1)^p$ where $ 1 <p$ and 
\[ \frac{N}{4} < \frac{p}{p-1} + \frac{p+1}{p-1} \left( \sqrt{ \frac{2p}{p+1}} + \sqrt{  \frac{2p}{p+1} - \sqrt{  \frac{2p}{p+1}}} - \frac{1}{2} \right) =:h(p).\]  Then the extremal solution $ u^*$ associated with $(N)_{\lambda^*}$ is bounded.  
\end{thm}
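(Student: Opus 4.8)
The plan is to reduce the boundedness of $u^*$ to a single uniform integrability estimate on the minimal branch, and then to run an elliptic bootstrap. Concretely, it suffices to produce an exponent $q>\frac{(p-1)N}{4}$ with $\sup_{0<\lambda<\lambda^*}\int_\Om(u_\lambda+1)^{q}\,dx<\infty$: then $\Delta^2 u_\lambda=\lambda(u_\lambda+1)^p$ is bounded in $L^{q/p}(\Om)$, so $u_\lambda$ is bounded in $W^{4,q/p}(\Om)$, and since $W^{4,r/p}\hookrightarrow L^{rN/(pN-4r)}$ strictly improves the exponent at each step precisely when $r>\frac{(p-1)N}{4}$, after finitely many steps the exponent exceeds $\frac{pN}{4}$ and $u_\lambda\in L^\infty$ uniformly; letting $\lambda\nearrow\lambda^*$ gives $u^*\in L^\infty$. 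Since the bound for $\lambda$ bounded away from $\lambda^*$ is classical, I work throughout with $\lambda\in[\lambda^*/2,\lambda^*]$, so that every constant below depends only on $\Om$, $N$ and $p$.

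Two estimates feed the main inequality. First, Theorem~A applied with $g(u):=\sqrt{\tfrac{2}{p+1}}\big((u+1)^{\frac{p+1}{2}}-1\big)$ — which has $g(0)=0$, $g,g',g''\ge 0$ on $[0,\infty)$, and $g(t)g'(t)=\big((t+1)^{\frac{p+1}{2}}-1\big)(t+1)^{\frac{p-1}{2}}\le(t+1)^p=f(t)$ — yields the pointwise bound $v_\lambda:=-\Delta u_\lambda\ge\sqrt\lambda\,g(u_\lambda)$ in $\Om$, whence $v_\lambda\ge c(p)\sqrt\lambda\,(u_\lambda+1)^{\frac{p+1}{2}}$ on $\{u_\lambda\ge1\}$ for a constant $c(p)>0$. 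Second, the stability inequality $(\ref{four_sta})$, which for $f(u)=(u+1)^p$ reads $\sqrt{\lambda p}\int(u_\lambda+1)^{\frac{p-1}{2}}\varphi^2\le\int|\nabla\varphi|^2$ for every $\varphi\in H_0^1(\Om)$. I will also use the identity obtained by multiplying the second equation $-\Delta v_\lambda=\lambda(u_\lambda+1)^p$ by the admissible test function $\tfrac1\gamma\big((u_\lambda+1)^\gamma-1\big)$ and integrating by parts (recalling $-\Delta u_\lambda=v_\lambda$): for $0<\gamma<1$ the cross term has a favourable sign and can be dropped, leaving $\int v_\lambda^2(u_\lambda+1)^{\gamma-1}\,dx\le\frac{\lambda}{\gamma}\int(u_\lambda+1)^{p+\gamma}\,dx$.

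The main computation mimics the second-order case. Fix $\alpha$ (with $2\alpha>1$), put $\varphi=(u+1)^\alpha-1\in H_0^1(\Om)$ (admissible, $u=u_\lambda$ being smooth and vanishing on $\pOm$) into $(\ref{four_sta})$ with $f'(u)=p(u+1)^{p-1}$, and integrate the right-hand side by parts using $-\Delta u=v$ and $(u+1)^{2\alpha-1}-1=0$ on $\pOm$, to get
\[
\sqrt{\lambda p}\int(u+1)^{\frac{p-1}{2}}\big((u+1)^{\alpha}-1\big)^2\,dx\ \le\ \frac{\alpha^2}{2\alpha-1}\int\big((u+1)^{2\alpha-1}-1\big)\,v\,dx .
\]
The right side is then controlled by a Cauchy–Schwarz split that places the weight $(u+1)^{(\gamma-1)/2}$ on $v$, invokes the weighted $L^2$–estimate above for $\int v^2(u+1)^{\gamma-1}$, and uses the pointwise bound $v\gtrsim\sqrt\lambda(u+1)^{(p+1)/2}$ to trade remaining occurrences of $\lambda(u+1)^p$ for powers of $u+1$. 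With $\gamma$ chosen (hence the Cauchy–Schwarz weight) as a function of $\alpha$, the highest-order contributions balance at the single exponent $q_0:=\frac{p-1}{2}+2\alpha$, and the inequality takes the self-referential form
\[
\sqrt p\ \int(u+1)^{q_0}\,dx\ \le\ \kappa(\alpha,p)\int(u+1)^{q_0}\,dx\ +\ C\sum\nolimits_{0<r<q_0}\int(u+1)^{r}\,dx ,
\]
where $\kappa(\alpha,p)$ is an explicit algebraic function built from $\tfrac{\alpha^2}{2\alpha-1}$, $\gamma^{-1/2}$ and the constant of Theorem~A. The sub-critical sum is absorbed by Hölder interpolation between $L^1$ and $L^{q_0}$ together with Young's inequality and the uniform a priori bound $u_\lambda\in L^1(\Om)$ (from $u_\lambda\le u^*$ and $u^*\in L^1$); thus the estimate closes, i.e. $\sup_\lambda\int(u_\lambda+1)^{q_0}<\infty$, as soon as $\kappa(\alpha,p)<\sqrt p$. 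Optimising over the admissible $\alpha$ (subject to $2\alpha>1$ and the constraints forcing $0<\gamma<1$) reduces the inequality $\kappa(\alpha,p)<\sqrt p$ to a polynomial condition in $\alpha$ whose critical root is expressible through $\frac{2p}{p+1}$ and its iterated square root; unwinding this shows that the supremum of the attainable $q_0$ is $(p-1)\,h(p)$, so the requirement $q_0>\frac{(p-1)N}{4}$ from the first paragraph is met exactly when $\frac N4<h(p)$, and the proof concludes.

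The heart of the matter, and the step I expect to be most delicate, is the simultaneous calibration of $g$, of the test-function exponent $\alpha$, and of the Cauchy–Schwarz weight $\gamma$ so that (i) the highest-order powers of $u+1$ on the right collapse to $q_0$ while the remaining ones stay below $q_0$, (ii) the constant $\kappa(\alpha,p)$ is minimised, and (iii) the admissibility constraints on $\alpha$ and $\gamma$ do not truncate the optimal range; squeezing out the \emph{sharp} constant here — rather than merely \emph{a} constant, as in \cite{craig1} — is precisely what produces the nested radical in $h(p)$ and hence the improvement over all previously known critical dimensions. By contrast, the integrations by parts, the maximum-principle input behind Theorem~A, the Young absorptions, and the uniformity in $\lambda$ (which is why one restricts to $\lambda\in[\lambda^*/2,\lambda^*]$) are all routine once the exponents are pinned down.
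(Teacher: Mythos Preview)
Your overall plan is sound --- get a uniform $L^q$ bound on $(u_\lambda+1)$ for large enough $q$, then feed elliptic regularity --- but the execution has a genuine gap at the crucial step, and the test function you chose will not produce the threshold $h(p)$.

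You test $(\ref{four_sta})$ with $\varphi=(u+1)^\alpha-1$, which gives
\[
\sqrt{\lambda p}\int (u+1)^{\frac{p-1}{2}}\big((u+1)^\alpha-1\big)^2
\;\le\;\frac{\alpha^2}{2\alpha-1}\int\big((u+1)^{2\alpha-1}-1\big)\,v .
\]
Here $v$ appears on the \emph{right} with the \emph{wrong} sign for the pointwise lower bound $v\ge c\sqrt\lambda\,(u+1)^{(p+1)/2}$ to help, so you are forced to bound $v$ from above via Cauchy--Schwarz and your weighted identity $\int v^2(u+1)^{\gamma-1}\le\frac{\lambda}{\gamma}\int(u+1)^{p+\gamma}$. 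But that identity requires $0<\gamma<1$ (otherwise the cross term $(\gamma-1)\int v(u+1)^{\gamma-2}|\nabla u|^2$ has the wrong sign and cannot be dropped). Balancing exponents forces $\gamma=2\alpha-\tfrac{p+1}{2}$, so $\gamma<1$ caps $q_0=\tfrac{p-1}{2}+2\alpha$ below $p+1$; for large $p$ this yields at best $N<4$, nowhere near $N<10.718\ldots$ Your final assertion that ``unwinding this shows the supremum of the attainable $q_0$ is $(p-1)h(p)$'' is therefore not a computation but a hope, and with these ingredients it is false.

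The paper's proof rests on a different test function: one puts $\phi=v^t$ (a power of $v=-\Delta u$, not of $u+1$) into $(\ref{four_sta})$. Then
\[
\sqrt\lambda\int\sqrt{f'(u)}\,v^{2t}\;\le\;t^2\int v^{2t-2}|\nabla v|^2
\;=\;\frac{t^2}{2t-1}\int v^{2t-1}(-\Delta v)
\;=\;\frac{t^2\lambda}{2t-1}\int f(u)\,v^{2t-1},
\]
so the \emph{second} equation $-\Delta v=\lambda f(u)$ is used directly, and no upper bound on $v$ is ever needed. Writing $v^{2t}=v^{2t-1}\cdot v$ on the left and invoking the pointwise \emph{lower} bound from Theorem~A on that single factor of $v$, one arrives (after an $\E$--split and a $\{u+1\ge T\}\cup\{v\ge k\}$ domain decomposition to absorb lower-order terms) at the closing condition $\sqrt{2p/(p+1)}>t^2/(2t-1)$, whose largest root is $t_p=\sqrt{2p/(p+1)}+\sqrt{2p/(p+1)-\sqrt{2p/(p+1)}}$. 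A second use of the pointwise bound converts $\int(u+1)^p v^{2t-1}\le C$ into $\int(u+1)^{p+(p+1)(t-1/2)}\le C$, and \emph{that} exponent, divided by $p-1$, is exactly $h(p)$. The nested radical in $h(p)$ comes from solving $t^2-2st+s=0$ with $s=\sqrt{2p/(p+1)}$, which is a structure your $(\alpha,\gamma)$--optimisation simply does not see.

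In short: replace $\varphi=(u+1)^\alpha-1$ by $\phi=v^t$, drop the Cauchy--Schwarz/weighted-$L^2$ detour entirely, and use the pointwise bound on $v$ on the \emph{left} side rather than trying to control $v$ on the right. Your bootstrap paragraph is fine (the paper instead rewrites $\Delta^2u^*=\lambda^*c(x)u^*+\lambda^*$ with $0\le c\le p(u^*)^{p-1}$ and appeals to $u^*\in W^{4,2}$ to avoid iterating, but either route works once the $L^q$ estimate is in hand).
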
  Note that $ h(p)$ is decreasing in $p$ on $(1,\infty)$ and 
\[  \lim_{p \rightarrow \infty} 4 h(p) = 2 + 4 \sqrt{2} + 4 \sqrt{ 2 - \sqrt{2}} = 10.718...\]

    \begin{thm}  \label{sing} Suppose $f(u)=\frac{1}{(1-u)^p}$ where $ 1 <p \neq3 $ and 
    \[ \frac{N}{4} < \frac{p}{p+1} + \frac{p-1}{p+1} \left(  \sqrt{  \frac{2p}{p-1}} + \sqrt{  \frac{2p}{p-1} - \sqrt{ \frac{2p}{p-1}}} - \frac{1}{2} \right).\]  Then the extremal solution $u^*$ associated with $(N)_{\lambda^*}$ satisfies $ \sup_\Omega u^* <1$. 
    
    \end{thm}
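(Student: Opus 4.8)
The plan is to run the machine already used for Theorems \ref{expon} and \ref{poly}, with the algebraic choices adapted to $f(u)=(1-u)^{-p}$; write $h_p$ for the right--hand side of the theorem. Fix $0<\lambda<\lambda^*$, let $u=u_\lambda$ be the minimal solution of $(N)_\lambda$ and set $v:=-\Delta u\ge 0$. The goal is to establish an a priori bound
\[ \int_\Omega (1-u_\lambda)^{-q}\,dx\ \le\ C(q,N,p,\Omega,\lambda^*),\qquad 0<\lambda<\lambda^*,\]
for every $q$ below a threshold $q_p$ satisfying $\frac{4q_p}{p+1}=4h_p$, i.e. $f(u_\lambda)=(1-u_\lambda)^{-p}$ is bounded in $L^{q/p}$ uniformly in $\lambda$. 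Since $N/4<h_p$, such a $q$ can be chosen with $4-\frac{Np}{q}>0$; letting $\lambda\nearrow\lambda^*$ and using $u_\lambda\nearrow u^*$ gives $f(u^*)\in L^{q/p}(\Omega)$, hence $u^*\in W^{4,q/p}(\Omega)\hookrightarrow C(\overline\Omega)$ with a uniform Schauder/Hölder bound on the $u_\lambda$, and then the standard argument (interior maximum $+$ vanishing gradient $+$ the uniform $L^q$ bound, exactly as in \cite{craig1}) rules out $\sup_\Omega u^*=1$ and yields $\sup_\Omega u^*<1$.

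Two ingredients feed the a priori estimate. First, a pointwise lower bound: the function $g(t):=\sqrt{\tfrac{2}{p-1}}\bigl((1-t)^{-(p-1)/2}-1\bigr)$ is admissible in Theorem A, since $g,g',g''\ge 0$ on $[0,1)$, $g(0)=0$, and $g g'=(1-t)^{-p}-(1-t)^{-(p+1)/2}\le f(t)$ because $p>1$; hence
\[ v=-\Delta u\ \ge\ \sqrt{\tfrac{2\lambda}{p-1}}\,\bigl((1-u)^{-(p-1)/2}-1\bigr)\qquad\text{in }\Omega,\]
and (together with Lemma \ref{pointwise}) this is the pointwise control of $v$ used below. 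Second, the stability inequality (\ref{four_sta}), which for this nonlinearity reads
\[ \sqrt{\lambda p}\int_\Omega (1-u)^{-(p+1)/2}\,\phi^2\ \le\ \int_\Omega |\nabla\phi|^2,\qquad \phi\in H_0^1(\Omega).\]

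The next step is to feed into (\ref{four_sta}) a test function of the form $\phi=(1-u)^{-\beta}-1$ (or a power of it, exactly as in the proof of Theorem \ref{poly}), with $\beta>0$ a free parameter. Writing $\nabla\phi=\phi'(u)\nabla u$ and integrating by parts once rewrites the right--hand side as $\int_\Omega P(u)\,v$ with $P'=(\phi')^2$, $P(0)=0$; this trades $|\nabla u|^2$ for $v=-\Delta u$. The integral $\int_\Omega P(u)\,v$ is then treated using the two ingredients simultaneously: against the pointwise bound on $v$ where a lower bound helps, and against the second equation $-\Delta v=\lambda f(u)$ where control from above is needed, via an integration by parts against a suitably chosen auxiliary function $Z(u)$ together with the identity $\int Z'(u)v^2=\lambda\int f(u)Z(u)-\int(-Z''(u))|\nabla u|^2 v$, so that the relevant moments of $v$ are bounded by powers of $(1-u)^{-1}$. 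Collecting the leading powers and absorbing the lower--order terms by Young's inequality produces a constraint of the shape "(quadratic in $\beta$, and in the parameter hidden in $Z$)$\ \le$ (constant depending on $p$)", with an admissibility range for $\beta$; optimizing over these parameters, as in Theorem \ref{poly}, yields precisely the threshold $h_p$. The exclusion $p=3$ enters here exactly as in \cite{craig1}: one of the exponents generated by these integrations by parts is, up to scaling, $2\beta+2-\tfrac{p+1}{2}$, which vanishes and forces a logarithmic loss precisely at $p=3$. As a consistency check one should recover Theorem \ref{poly} under the formal substitution $p\mapsto -p$ (i.e. $p-1\leftrightarrow p+1$, $\tfrac{2p}{p+1}\to\tfrac{2p}{p-1}$), and the constant of Theorem \ref{expon} in the limit $p\to\infty$.

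The main obstacle is the bookkeeping in this last step: tracking all the lower--order terms produced by the (repeated) integrations by parts, checking that the sign/convexity conditions needed both to discard them and to apply Theorem A and the identity above actually hold over the entire admissible range of $\beta$ and of the parameter in $Z$ --- in particular for $1<p<3$, where the choices that are natural for $p>3$ have the wrong convexity and the auxiliary function $Z$ (equivalently the precise exponent in $\phi$) must be taken differently --- and then verifying that the resulting two--parameter optimization evaluates to exactly the nested radical $\frac{p}{p+1}+\frac{p-1}{p+1}\bigl(\sqrt{\tfrac{2p}{p-1}}+\sqrt{\tfrac{2p}{p-1}-\sqrt{\tfrac{2p}{p-1}}}-\tfrac12\bigr)$. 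Everything else is a routine adaptation of the proof of Theorem \ref{poly}.
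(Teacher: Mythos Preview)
Your overall strategy---stability inequality (\ref{four_sta}), pointwise lower bound on $v=-\Delta u$ from Theorem~A, then Lemma~C from \cite{craig1} to conclude---matches the paper. But the core step diverges from what is actually done, and your account of where the restriction $p\neq 3$ comes from is incorrect.

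The paper does \emph{not} test (\ref{four_sta}) on a function of $u$. Exactly as in the proofs of Theorems \ref{expon} and \ref{poly}, it takes $\phi=v^t$ with $v=-\Delta u$ and $t>1$. Since $-\Delta v=\lambda f(u)$, one integration by parts gives $\int|\nabla(v^t)|^2=\frac{t^2\lambda}{2t-1}\int f(u)\,v^{2t-1}$, and (\ref{four_sta}) becomes
\[\sqrt{\lambda}\int\sqrt{f'(u)}\,v^{2t}\ \le\ \frac{t^2\lambda}{2t-1}\int f(u)\,v^{2t-1}.\]
One then writes the left side as $\varepsilon(\cdots)+(1-\varepsilon)(\cdots)$ and inserts the pointwise bound $v\ge\sqrt{\tfrac{2\lambda}{p-1}}\bigl((1-u)^{-(p-1)/2}-1\bigr)$ only in the $(1-\varepsilon)$ piece, to obtain
\[\Bigl((1-\varepsilon)\sqrt{\tfrac{2p}{p-1}}-\tfrac{t^2}{2t-1}\Bigr)\int\frac{v^{2t-1}}{(1-u)^p}+\frac{\varepsilon\sqrt{p}}{\sqrt\lambda}\int\frac{v^{2t}}{(1-u)^{(p+1)/2}}\ \le\ (1-\varepsilon)\sqrt{\tfrac{2p}{p-1}}\int\frac{v^{2t-1}}{(1-u)^{(p+1)/2}}.\]
The right-hand integral is not handled by Young's inequality but by a domain decomposition into $\{u\ge T\}$, $\{u<T,\ v<k\}$, $\{u<T,\ v\ge k\}$, which allows it to be absorbed back into the two left-hand integrals (for $T$ close to $1$ and $k$ large) plus a constant, provided $\sqrt{\tfrac{2p}{p-1}}>\tfrac{t^2}{2t-1}$. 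The nested radical $\tilde t_p$ is simply the larger root of $t^2-2\sqrt a\,t+\sqrt a=0$ with $a=\tfrac{2p}{p-1}$; there is no auxiliary function $Z(u)$ and no two-parameter optimization. Once $\int(1-u)^{-p}v^{2t-1}\le C_t$, a second use of the pointwise bound yields $\int(1-u)^{-p-(p-1)(t-1/2)}\le\tilde C_t$, and Lemma~C finishes.

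Your proposed route---$\phi$ a function of $u$, then recovering control of $v$ via the identity for $\int Z'(u)v^2$---may be workable, but it is not what the paper does, and your sketch leaves the actual optimization unverified; it is not clear a priori that it lands on the same threshold. More concretely, your explanation of the exclusion $p=3$ is wrong: no exponent degenerates in the energy estimate. The restriction enters only through Lemma~C, where the Sobolev imbedding of $W^{4,(p+1)N/(4p)}$ is borderline because $\tfrac{4p}{p+1}=3$ is an integer at $p=3$; the paper explicitly flags this as an artifact of that lemma.
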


\begin{remark}  \begin{itemize} \item $f(u)=e^u$.    From Theorem \ref{expon} we have $ u^*$ bounded for $ N \le 10$.  Previously $u^*$ was only known to be bounded for $ N \le 8$, \cite{craig1}.   On the ball $u^*$ is bounded if and only if $ N \le 12$, see \cite{DDGM}, and so it is expected $ u^*$ is bounded for $ N \le 12$ on general domains.  

 If one examines the proof from \cite{DDGM} the importance of the radial domain is limited to the following property: there exists some $ x_0 \in \Omega$ such that $ \max_\Omega u_\lambda = u_\lambda(x_0)$ for all $ 0 < \lambda < \lambda^*$.   This property is known to hold for domains $ \Omega$ which are symmetric across each hyperplane $ x_i=0$ and which are convex along the coordinate axis; use the Moving Plane Method or one can use an stability argument.   In any case $ u^*$ is bounded for $ N \le 12$ provided the domain $ \Omega$ satisfies the above conditions.

\item  $f(u)=(u+1)^p$.  From \cite{craig1} we know that the extremal solution is bounded provided  $\frac{N}{4} < \frac{2p}{p-1}$.  One can show that  $ \frac{2p}{p-1} < h(p)$ for all $ p>1$ and so Theorem \ref{poly} is gives an improvement over the results from \cite{craig1}.  Additionally we have $ u^*$ bounded for any $ 1<p$ for $ N \le 10$ where as in \cite{craig1} this only held for $ N \le 8$.  

\item $f(u)=(1-u)^{-p}$.  The most studied case is when $ p=2$ and then $(N)_{\lambda}$ can be seen as a simple model for a Micro-Electro-Mechanical device with pinned boundary conditions.  From \cite{craig1} we know $u^*$ is bounded away from $1$ provided $ N \le 5$.  Theorem \ref{sing} improves this to $N \le 6$.   This falls short of the expected result that $u^*$ is bounded for $ N \le 8$, which holds on the ball, see \cite{MoradifamMEMS}.  We believe this condition $ p \neq 3$ is somewhat artificial and is coming from our proof method;   the case $ p=3$ involves a borderline Sobolev imbedding theorem, see the proof of Lemma C. 

\end{itemize}

\end{remark}

We begin with some pointwise lower bounds on $ -\Delta u$.   The following result follows immediately after considering Theorem A above.   

\begin{lemma}  \label{pointwise}  Suppose $u$ is a smooth solution of $(N)_\lambda$.  

\begin{enumerate} \item Suppose $ f(t) =e^t$.  Then $ -\Delta u \ge \sqrt{2 \lambda } (e^\frac{u}{2}-1)$. 

\item Suppose $ f(t)=(t+1)^p$ where $p>1$.  Then  $ -\Delta u \ge \sqrt{\lambda} \sqrt{ \frac{2}{p+1}} ((u+1)^\frac{p+1}{2}-1)$.

\item Suppose $ f(t)=\frac{1}{(1-t)^p}$ where $ p >1$.  Then $ -\Delta u \ge \sqrt{\lambda} \sqrt{ \frac{2}{p-1}} \left( \frac{1}{(1-u)^\frac{p-1}{2}} -1 \right)$. 

\end{enumerate} 

\end{lemma}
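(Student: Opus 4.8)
The plan is to obtain all three estimates as immediate consequences of Theorem A, by exhibiting in each case an explicit admissible function $g$. First I would record the standing observation that, since $f>0$ and $(N)_\lambda$ carries Navier boundary conditions, the maximum principle gives $u\ge 0$ (hence also $-\Delta u\ge 0$) in $\Omega$; in the singular case a smooth solution moreover satisfies $\sup_\Omega u<1$. Consequently, in each of the three cases the range of $u$ lies in the natural domain of the $g$ to be chosen, and there the sign conditions $g,g',g''\ge 0$ together with $g(0)=0$ hold by inspection. Thus the only hypothesis of Theorem A requiring a genuine (but elementary) verification is the pointwise inequality $f(t)\ge g(t)g'(t)$.

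For $f(t)=e^t$ I would take $g(t)=\sqrt{2}\,(e^{t/2}-1)$, for which $g(t)g'(t)=e^t-e^{t/2}\le e^t$. For $f(t)=(t+1)^p$ I would take $g(t)=\sqrt{\tfrac{2}{p+1}}\,\bigl((t+1)^{(p+1)/2}-1\bigr)$, for which $g(t)g'(t)=(t+1)^p-(t+1)^{(p-1)/2}\le (t+1)^p$. For $f(t)=(1-t)^{-p}$ I would take $g(t)=\sqrt{\tfrac{2}{p-1}}\,\bigl((1-t)^{-(p-1)/2}-1\bigr)$, for which $g(t)g'(t)=(1-t)^{-p}-(1-t)^{-(p+1)/2}\le (1-t)^{-p}$. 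In the latter two cases the assumption $p>1$ is exactly what makes $g''\ge 0$ on the relevant range. Applying Theorem A then yields $-\Delta u\ge\sqrt{\lambda}\,g(u)$, which is precisely the asserted estimate in items (1), (2) and (3) respectively.

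I do not expect a real obstacle here: the lemma is a direct specialization of Theorem A, and each of the three energy-type functions $g$ is engineered so that $g g'$ differs from $f$ only by an explicit nonnegative lower-order term, which forces the differential inequality. The single point I would be careful to state explicitly is the appeal to the maximum principle to guarantee $u\ge 0$ (and $\sup_\Omega u<1$ in the singular case), since this is what ensures $g$ is smooth with the required monotonicity and convexity on the whole of the range of $u$; everything else is routine calculus.
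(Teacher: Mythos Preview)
Your proposal is correct and follows exactly the route taken in the paper: the paper merely asserts that the lemma ``follows immediately after considering Theorem~A,'' and your explicit choices of $g$ (together with the one-line verifications that $gg'\le f$ and $g,g',g''\ge 0$, $g(0)=0$) are precisely what that sentence is leaving to the reader. If anything, you have written out more detail than the paper itself.
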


\textbf{Proof of Theorem \ref{expon}.}   To cut down on repetition we first assume $f$ is an arbitrary nonlinearity  satisfying (R) or (S).   Let $u$ denote the smooth minimal solution of $(N)_\lambda$ where $ \frac{\lambda^*}{2} < \lambda < \lambda^*$ and test (\ref{four_sta}) on $ \phi=v^t$ where $ 1<t$ to arrive at 
\[ \sqrt{\lambda} \int \sqrt{f'(u)} v^{2t} \le t^2 \int v^{2t-2} | \nabla v|^2 = \frac{t^2 \lambda}{2t-1} \int f(u)v^{2t-1},\]  where the last equality is obtained by using $(N)_\lambda$.   We now rewrite this as 
\begin{equation} \label{starting_gen} \E \sqrt{\lambda}  \int \sqrt{f'(u)} v^{2t} + \sqrt{\lambda}(1-\E) \int \sqrt{f'(u)} v^{2t-1} v \le \frac{t^2 \lambda}{2t-1} \int f(u) v^{2t-1},
\end{equation} where $ \E>0$ is greater than zero but small.   

We now assume we are in the case given by $ f(z)=e^z$.  We leave the first term as it is but we replace $v$ in the second term using the pointwise lower estimate on $v$ from Lemma \ref{pointwise} and we regroup the resulting inequality to arrive at 
\begin{eqnarray} \label{popopo} 
\left( (1-\E) \sqrt{2} - \frac{t^2}{2t-1} \right) \int e^u v^{2t-1} & + & \frac{\E}{\sqrt{\lambda}} \int e^\frac{u}{2} v^{2t} \nonumber \\
  & \le & (1-\E) \sqrt{2} \int e^\frac{u}{2} v^{2t-1}.
 \end{eqnarray}
We now estimate the right hand side and for this we denote this integral by $I$.  We write the integral $I$ as a sum of integral over various subregions of $\Omega$  
\[ I = \int_{u \ge T} + \int_{ u<T, v \le k} + \int_{ u<T, v >k},\] where $ T>1$ and $ k>1$. Then one easily sees that 
\begin{eqnarray*} 
I & \le & e^\frac{-T}{2} \int_{u \ge T} e^u v^{2t-1} + |\Omega| e^\frac{T}{2} k^{2t-1} \\
 && + \frac{1}{k} \int_{ u<T,v>k} e^\frac{u}{2} v^{2t}
 \end{eqnarray*} and we then replace on the integrals on the right with integrals over the full region $ \Omega$.  Putting this back into (\ref{popopo}) gives

 \begin{eqnarray*} 
 \left( (1-\E) \sqrt{2} - \frac{t^2}{2t-1} - \frac{(1-\E) \sqrt{2}}{e^\frac{T}{2}} \right) \int e^u v^{2t-1} + && \nonumber \\
 + \left( \frac{\E}{ \sqrt{\lambda}} - \frac{(1-\E) \sqrt{2}}{k} \right) \int e^\frac{u}{2} v^{2t} && \nonumber \\
  \quad \le (1-\E) \sqrt{2} | \Omega| e^\frac{T}{2} k^{2t-1}. 
  \end{eqnarray*}
   We now assume that $ 1 <t< t_0:= \sqrt{2} + \sqrt{ 2 - \sqrt{2}}$ and so $ \sqrt{2} - \frac{t^2}{2t-1}>0$. We now pick $ \E>0$ but sufficiently small such that $ (1-\E) \sqrt{2} - \frac{t^2}{2t-1}$ is still positive.  We then pick $ T$ and $k$ sufficiently big such that both coefficients multiplying the integrals are positive.  We then see that for all $ 1 <t <t_0$  there is some $ C_t <\infty$ such that 
   \[ \int e^u v^{2t-1} \le C_t,\] for all $ \frac{\lambda^*}{2} < \lambda < \lambda^*$.  We now use the lower bound for $v$ again to see that  for all $ 1 <t<t_0$ there is some $ \tilde{C}_t < \infty$ such that 
   \[ \int e^{(t+ \frac{1}{2})u} \le \tilde{C}_t,\] for $  \frac{\lambda^*}{2} < \lambda < \lambda^*$.  Now using elliptic regularity and the Sobolev imbedding theorem we see that we will get a uniform $L^\infty$ bound on the smooth minimal solutions (and hence also on $ u^*$) provided $ t_0 + \frac{1}{2} > \frac{N}{4}$.  We rewrite this as 
   \[ N < 2 + 4 \sqrt{2} + 4 \sqrt{ 2 - \sqrt{2}} = 10.718...\]

\hfill $ \Box$

\textbf{Proof of Theorem \ref{poly}.}  Let $ u$ denote the  smooth minimal solution of $(N)_\lambda$ where $ \frac{\lambda^*}{2} < \lambda < \lambda^*$.  Our starting point is the general formula (\ref{starting_gen}).  Plugging in $ f(z)=(z+1)^p$ one obtains, after using the pointwise lower estimate for $v$ in the second integral of (\ref{starting_gen}) and regrouping,

\begin{eqnarray*}
\left( (1-\E) \sqrt{ \frac{2p}{p+1}} - \frac{t^2}{t-1} \right) \int (u+1)^p v^{2t-1} & +&  \E \frac{\sqrt{p}}{\sqrt{\lambda}} \int (u+1)^\frac{p-1}{2} v^{2t}\\
     & \le & (1-\E) \sqrt{ \frac{2p}{p+1}} \int (u+1)^\frac{p-1}{2} v^{2t-1}. 
\end{eqnarray*} We now estimate the integral on the right, which we denote by $I$, using the same approach as in the proof of Theorem \ref{expon}.  So let $ 1<T,k$ and note 
\begin{eqnarray*}
I&=& \int (u+1)^\frac{p-1}{2} v^{2t-1} \\
&=& \int_{u+1 \ge T} + \int_{u+1 <T, v <k} + \int_{ u+1 <T,v\ge k} \\
& \le & \frac{1}{T^\frac{p+1}{2}} \int_{u+1 \ge T} (u+1)^p v^{2t-1} + |\Omega| T^\frac{p-1}{2} k^{2t-1} \\
&& + \frac{1}{k} \int_{u+1 <T, v \ge k} (u+1)^\frac{p-1}{2} v^{2t}\\
& \le & \frac{1}{T^\frac{p+1}{2}} \int  (u+1)^p v^{2t-1} + |\Omega| T^\frac{p-1}{2} k^{2t-1}  \\
&& + \frac{1}{k} \int (u+1)^\frac{p-1}{2} v^{2t}.
\end{eqnarray*}  Putting this estimate back into the previous inequality and collecting terms gives 

\begin{eqnarray} \label{dd}
\left( (1-\E) \sqrt{ \frac{2p}{p+1}} - \frac{t^2}{2t-1} - \frac{(1-\E)}{T^\frac{p+1}{2}} \sqrt{ \frac{2p}{p+1}} \right) \int (u+1)^p v^{2t-1} + && \nonumber \\
 + \left( \frac{\E \sqrt{p}}{ \sqrt{\lambda}} - \frac{(1-\E)}{k} \sqrt{ \frac{ 2p}{p+1}} \right) \int (u+1)^\frac{p-1}{2} v^{2t} \nonumber \\ 
 \qquad  \le  (1-\E) \sqrt{ \frac{2p}{p+1}} | \Omega| T^\frac{p-1}{2} k^{2t-1}.
\end{eqnarray}  We now show that for all for all $ 1 <t< t_p$ where 
\[t_p:= \sqrt{ \frac{2p}{p+1}} + \sqrt{  \frac{2p}{p+1}- \sqrt{ \frac{2p}{p+1}}  },\] 
there is some $ C_t< \infty$ such that 
\begin{equation} \label{ztzt}
 \int (u+1)^p v^{2t-1} \le C_t
\end{equation} for all $ \frac{\lambda^*}{2} < \lambda < \lambda^*$ (and hence the same estimate also holds for $u^*$).    To see this note that for $ 1 <t <t_p$  we have $ (1-\E) \sqrt{  \frac{2p}{p+1}} - \frac{t^2}{2t-1} >0$ for sufficiently small $ \E>0$.  We now pick $ T$ and $k$ sufficiently big such that both coefficients in front of the integrals in (\ref{dd}) are  positive.   Using the pointwise lower estimate for $v$ in (\ref{ztzt}) shows that for all $ 1 <t<t_p$ there is some $ \tilde{C}_t$ such that 
\begin{equation} \label{sho}
 \int (u+1)^{ p+ (p+1)( t - \frac{1}{2})} \le \tilde{C}_t,
\end{equation} for all $ \lambda$ as above.   We now rewrite the equation for the extremal solution in the alternate form (which will allow us to avoid a bootstrap argument) as 
\[ \Delta^2 u^* = \lambda^* c(x) u^* + \lambda^* \qquad \Omega,\] with $ u^*=\Delta u^*=0 $ on $ \pOm$.   Here 
\[ 0 \le c(x):=  \frac{ (u^*+1)^p-1}{u^*} \le p (u^*)^{p-1},\]  by convexity.   From \cite{craig1} we have that $ u^* \in W^{4,2}(\Omega)$ and so by elliptic regularity theory we will have $ u^* \in L^\infty$ provided $ c \in L^\theta$ for some $ \theta > \frac{N}{4}$.   Using (\ref{sho}) we see this is equivalent to 
\[ \frac{N}{4} < \frac{p}{p-1} + \frac{p+1}{p-1} (t_p- \frac{1}{2}).\]   Multiplying by $4$ gives the desired result. 

\hfill $ \Box$  

The following result  from \cite{craig1} will be needed to prove Theorem \ref{sing}.

\begin{lemma*} C \cite{craig1}. Let $ u_n$ denote a sequence of smooth solutions of $(N)_{\lambda_n}$ where $ f(u)=(1-u)^{-p}$ and $ 1 < p \neq 3$.  Suppose  there is some $ \alpha > 1$ and $\alpha \ge \frac{(p+1)N}{4p}$ such that $\sup_n \| f(u_n)\|_\alpha < \infty$.
Then $ \sup_n  \| u_n\|_{L^\infty}<1$.  

\end{lemma*}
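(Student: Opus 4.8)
The plan is to treat the fourth order equation as the second order system $-\Delta u_n=v_n$, $-\Delta v_n=\lambda_n f(u_n)$, and to run a bootstrap in which the pointwise lower bound on $-\Delta u_n$ supplied by Lemma~\ref{pointwise}(3) converts integrability of $v_n:=-\Delta u_n$ back into integrability of $f(u_n)$.

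\textbf{Set-up and reductions.} I would first put $v_n:=-\Delta u_n$, so $-\Delta v_n=\lambda_n f(u_n)$ in $\Omega$, $u_n=v_n=0$ on $\partial\Omega$, with $u_n,v_n\ge0$ by the maximum principle. We may assume $\lambda_n\ge\lambda_0>0$ for all $n$: along any subsequence with $\lambda_n\to0$ one has $\|\lambda_n f(u_n)\|_{L^\alpha}\to0$, hence by elliptic regularity $\|u_n\|_{W^{4,\alpha}}\to0$, and since $4\alpha\ge\frac{(p+1)N}{p}>N$ this forces $\|u_n\|_{L^\infty}\to0$ there, which is harmless for a uniform bound. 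Next I would record, from Lemma~\ref{pointwise}(3), that $v_n\ge c_0\big((1-u_n)^{-(p-1)/2}-1\big)$ with $c_0:=\sqrt{\lambda_0}\sqrt{2/(p-1)}>0$, so that pointwise in $\Omega$
\[ f(u_n)=(1-u_n)^{-p}=\big[(1-u_n)^{-(p-1)/2}\big]^{\sigma}\le C_0\,(1+v_n)^{\sigma},\qquad \sigma:=\frac{2p}{p-1}>1, \]
with $C_0$ independent of $n$.

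\textbf{The bootstrap step.} The engine is: if $f(u_n)$ is bounded in $L^a(\Omega)$ for some $1<a<N/2$, then it is bounded in $L^{a'}$, where $\tfrac1{a'}=\sigma\big(\tfrac1a-\tfrac2N\big)$. Indeed $-\Delta v_n=\lambda_n f(u_n)$ is then bounded in $L^a$, so by standard $L^p$ elliptic regularity and the embedding $W^{2,a}\hookrightarrow L^b$, $\tfrac1b=\tfrac1a-\tfrac2N$, $v_n$ is bounded in $L^b$; then $f(u_n)\le C_0(1+v_n)^\sigma$ is bounded in $L^{b/\sigma}=L^{a'}$. If some intermediate exponent equals $N/2$ exactly, I would instead use $W^{2,N/2}\hookrightarrow L^r$ for all finite $r$ and continue with some $r>N/2$; if it exceeds $N/2$, then $v_n$ is bounded in $L^\infty$ and I pass to the last step.

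\textbf{Iteration.} Starting from $a_0:=\alpha$ and writing $x_k:=1/a_k$, the recursion is $x_{k+1}=T(x_k)$ with $T(x)=\sigma(x-\tfrac2N)$, an increasing affine map of slope $\sigma>1$ whose unique fixed point is $x_*=\frac{2\sigma}{N(\sigma-1)}=\frac{4p}{N(p+1)}$, i.e. $a_*=\frac{N(p+1)}{4p}$. The hypothesis $\alpha\ge a_*$ says $x_0\le x_*$; if $x_0<x_*$ then $T(x)<x$ on $(-\infty,x_*)$, so $(x_k)$ strictly decreases, and since $T$ has no fixed point in the interval $(2/N,x_*)$ (and $2/N<x_*$ because $p>1$), the sequence must drop below $2/N$, i.e. $a_k\ge N/2$, after finitely many steps. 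Hence $f(u_n)$ is bounded in $L^q(\Omega)$ for some $q>N/2$.

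\textbf{Conclusion.} Then $-\Delta v_n=\lambda_n f(u_n)$ is bounded in $L^q$, $q>N/2$, so $v_n$ is bounded in $W^{2,q}\hookrightarrow L^\infty$, say $\|v_n\|_{L^\infty}\le M$ uniformly in $n$; feeding this into the pointwise bound gives $(1-u_n)^{-(p-1)/2}\le 1+M/c_0$, hence $\sup_n\|u_n\|_{L^\infty}\le 1-(1+M/c_0)^{-2/(p-1)}<1$, as claimed. (Equivalently, $(1-u_n)^{-p}$ bounded in $L^\infty$ already forces $1-u_n$ bounded below.)

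\textbf{Main obstacle.} The delicate point is the equality case $\alpha=a_*=\frac{N(p+1)}{4p}$: there $x_0=x_*$ and the $L^q$ iteration stalls, reproducing the same exponent at every step. To squeeze out a gain one must replace the Sobolev embedding $W^{2,a}\hookrightarrow L^b$ by its sharper Lorentz-space form and track the auxiliary index through the iteration; this produces an honest improvement precisely when $p\neq3$, the value $p=3$ being the one where the relevant embedding becomes borderline and the scheme breaks down --- which is the origin of the hypothesis $p\neq3$ (cf. the remark following Theorem~\ref{sing}). When $\alpha>\frac{N(p+1)}{4p}$, which is all the applications in Theorem~\ref{sing} actually need, the elementary $L^q$ bootstrap of the previous steps already suffices.
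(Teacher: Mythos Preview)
Your bootstrap argument is correct and complete when $\alpha>\frac{(p+1)N}{4p}$, and it is genuinely different from the paper's proof.  The paper does not iterate in $L^q$ at all: it applies $W^{4,\alpha}$ regularity once at the critical exponent $\alpha=\frac{(p+1)N}{4p}$, uses the Sobolev--Morrey embedding into a H\"older class (this is exactly where $p\neq 3$ enters, since $\frac{4p}{p+1}=3$ is the integer threshold), and then runs a pointwise contradiction argument near the maximum point $x_n$ of $u_n$, showing that $(1-u_n(x))\le \varepsilon_n+C|x-x_n|^{4/(p+1)}$ would force $\|f(u_n)\|_{\alpha}\to\infty$.  Your approach is more in the spirit of second-order theory and is arguably more transparent in the supercritical range; the paper's approach has the advantage that it handles the endpoint $\alpha=\frac{(p+1)N}{4p}$ in one stroke.

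The gap in your proposal is precisely that endpoint.  The lemma is stated with $\alpha\ge\frac{(p+1)N}{4p}$, and at equality your iteration is stationary; you acknowledge this but do not close it.  The sentence ``replace the Sobolev embedding by its Lorentz-space form \dots this produces an honest improvement precisely when $p\neq 3$'' is an assertion, not an argument: you would have to set up elliptic estimates and the power map $v\mapsto (1+v)^\sigma$ in the Lorentz scale, track two indices through the iteration, and explain concretely why $p=3$ (i.e.\ $\sigma=3$) is the obstruction in \emph{your} framework---it is not obvious that it is, since the role of $p=3$ in the paper's proof is tied to the Morrey embedding, not to any $L^{p,q}$ phenomenon.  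As written, you have proved the lemma only under the strict inequality (which, as you correctly note, is all that Theorem~\ref{sing} actually uses), but not the lemma as stated.
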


\begin{proof} We suppose that $ N$ is big enough so that $ \frac{(p+1)N}{4p} >1$,  the lower dimensional cases being similar we omit their details.   If $ f(u_n)$ is bounded in $ L^\frac{(p+1)N}{4p}$, then by elliptic regularity we have $ u_n$ bounded  in $ W^{4,\frac{(p+1)N}{4p}}$.   By the Sobolev imbedding theorem we have  $ u_n$ bounded in the space $ C^{ 4 - \left[ \frac{4p}{p+1} \right] -1,  \left[ \frac{4p}{p+1} \right] +1 - \frac{4p}{p+1}}( \overline{\Omega})$, where $ [\cdot ]$ denotes the floor function.
This naturally breaks into the two cases:  
 \begin{itemize}
\item $1<p<3$ and then  $u_n$ is bounded in $C^{1, \frac{3-p}{p+1}}$ 
\item $p>3$ and $ u_n$ is then bounded in $ C^{0,\frac{4}{p+1}}$.  
\end{itemize}
We now let $ x_n \in \Omega$ be such that $ u_n(x_n) = \max_\Omega u_n$.  We claim that there exists some $ C>0$, independent of $n$, such that 
\[ | u_n(x) - u_n(x_n)| \le C |x-x_n|^\frac{4}{p+1}, \qquad x \in \Omega.\] For the second case this is immediate, while for the first we use the fact that $ \nabla u_n(x_n)=0$ and the fact that there is some $ 0 \le t_n \le 1$ such that 
\begin{eqnarray*}
u_n(x) - u_n(x_n) &=& \nabla u_n( x_n+t_n(x-x_n)) \cdot  (x-x_n) \\
&=& \left(  \nabla u_n( x_n+t_n(x-x_n)) - \nabla u_n(x_n) \right) \cdot  (x-x_n)
\end{eqnarray*} along with the fact that $ \nabla u_n$ is bounded in $ C^{0,\frac{3-p}{p+1}}$ to show the claim.  

To complete the proof, we work towards a contradiction, and assume,  after passing to a subsequence,  that $ u_n(x_n) = 1-\E_n \rightarrow 1$.   By passing to another subsequence, we can assume that $ u_n$ converges in $ C( \overline{\Omega})$ which along with the boundary conditions guarantees that $ x_n \rightarrow x_0 \in \Omega$.    Then one has 
\begin{eqnarray*}
1-u_n(x) & =& 1-u_n(x_n) + u_n(x_n) - u_n(x) \\
&=& \E_n + u_n(x_n) - u_n(x) \\
& \le & \E_n + C |x-x_n|^\frac{4}{p+1}, 
\end{eqnarray*} and so there is some $ C_p>0$ such that  
\[ \left( 1-u_n(x) \right)^\frac{(p+1)N}{4} \le C_p \left( \E_n^\frac{(p+1)N}{4} + |x-x_n|^N \right).\] From this one sees that 
\[ f(u_n(x))^\frac{(p+1)N}{4p} \ge \frac{C_p^{-1}}{\E_n^\frac{(p+1)N}{4} + |x-x_n|^N}:=h_n(x).\]   But since $ x_n \rightarrow x_0 \in \Omega$ and $ \E_n \rightarrow 0$,  ones sees that $ \int_\Omega h_n(x) dx \rightarrow \infty$  which contradicts the integrability condition on $ f(u_n)$.  Hence we must have $ \sup_n \| u_n \|_{L^\infty} <1$. 

\end{proof}

\textbf{Proof of Theorem \ref{sing}.}    Let $ u$ denote the  smooth minimal solution of $(N)_\lambda$ where $ \frac{\lambda^*}{2} < \lambda < \lambda^*$.  Our starting point is the general formula (\ref{starting_gen}). Our starting point is the general formula (\ref{starting_gen}).  Plugging in $ f(z)=(1-z)^{-p}$ one obtains, after using the pointwise lower estimate for $v$ in the second integral of (\ref{starting_gen}) and regrouping,

\begin{eqnarray} \label{pss}
\left(  (1-\E) \sqrt{ \frac{2p}{p-1} } - \frac{t^2}{2t-1} \right) \int \frac{v^{2t-1}}{(1-u)^p} + && \nonumber \\
\quad + \frac{ \E  \sqrt{p}}{\sqrt{\lambda}} \int \frac{ v^{2t}}{ (1-u)^\frac{p+1}{2}} && \nonumber \\ 
 \le && (1-\E) \sqrt{ \frac{2p}{p-1}} \int \frac{ v^{2t-1}}{(1-u)^\frac{p+1}{2}}.
\end{eqnarray}   We now estimate the integral on the right, which we denote by $I$.  Let $ 0<T<1$ and $ k \ge 1$.
\begin{eqnarray*}  
I &=& \int_{ T \le u \le 1} + \int_{u<T, v<k} + \int_{u<T, v \ge k} \\ 
& \le & (1-T)^\frac{p-1}{2} \int_{T \le u \le 1} \frac{v^{2t-1}}{(1-u)^p} + \int_{u <T,v<k} \frac{v^{2t-1} }{(1-u)^\frac{p+1}{2}}  \\
& & +  \frac{1}{k} \int_{u <T, v \ge k} \frac{v^{2t}}{(1-u)^\frac{p+1}{2}} \\
& \le & (1-T)^\frac{p-1}{2} \int \frac{v^{2t-1}}{(1-u)^p} + \frac{| \Omega| k^{2t-1}}{(1-T)^\frac{p+1}{2}} \\
&& + \frac{1}{k} \int \frac{v^{2t}}{(1-u)^\frac{p+1}{2}}.
\end{eqnarray*}  Define 
\[ \tilde{t}_p:=  \sqrt{ \frac{2p}{p-1}} + \sqrt{   \frac{2p}{p-1} - \sqrt{ \frac{2p}{p-1}}},\] and note that for all $ 1 <t< \tilde{t}_p$ we have 
\[ \sqrt{ \frac{2p}{p-1}} - \frac{t^2}{2t-1}>0.\]   So for $ 1 <t<\tilde{t}_p$ we put this estimate for $I$ back into 
 (\ref{pss}) and then argue that for $ 0 <\E$ sufficiently small, $0<T<1$ sufficiently close to $1$ and for $k \ge 1$ sufficiently large, the resulting inequality has positive coefficients on the left in front the integrals, and hence for all $ 1 <t<\tilde{t}_p$ there is some $C_t<\infty$ such that 
 \begin{equation} \label{shiii}
 \int \frac{v^{2t-1}}{(1-u)^p} \le C_t,
 \end{equation} for all $ \frac{\lambda^*}{2} < \lambda < \lambda^*$. Now using the pointwise bound on $v$ we  see that for all $ 1 <t<\tilde{t}_p$ there is some $ \tilde{C}_t<\infty$ such that 
 \begin{equation} \label{cat}
 \int  \frac{1}{(1-u)^{ p + (p-1)(t - \frac{1}{2})}} \le \tilde{C}_t,
\end{equation}  for all $ \frac{\lambda^*}{2} < \lambda < \lambda^*$.  This estimate along with the previous lemma shows that we have $ u_\lambda$ uniformly bounded away from $1$ provided 
\[ \frac{(p+1)N}{4p} < 1 + \frac{p-1}{p} \left( \tilde{t}_p- \frac{1}{2} \right),\] and this completes the proof after some rearangements.

\hfill $ \Box$

\end{document}